\documentclass[12pt]{amsart}
\usepackage[utf8]{inputenc}

\usepackage[margin=1.2in]{geometry}                
\geometry{letterpaper}  
\usepackage{amsmath,amsthm,amssymb}
\usepackage{graphicx}
\usepackage{comment}
\usepackage{epstopdf}
\usepackage{tikz}
\usepackage{tikz-cd} 
\usepackage{xcolor}
\usepackage[colorlinks=true, pdfstartview=FitV, linkcolor=blue, citecolor=blue, urlcolor=blue]{hyperref}
\usepackage[nameinlink]{cleveref}
    \crefname{conj}{conjecture}{conjectures}
    \crefname{algocfline}{algorithm}{algorithms}
    
\usepackage{pgfplots, multicol, float, mathtools, extarrows}
\usepgfplotslibrary{fillbetween}
\DeclareGraphicsRule{.tif}{png}{.png}{`convert #1 `dirname #1`/`basename #1 .tif`.png}
\usepackage[linesnumbered,ruled,vlined]{algorithm2e}
\SetArgSty{textnormal} 



\makeatletter  
\@namedef{subjclassname@2020}{%
  \textup{2020} Mathematics Subject Classification}
\makeatother



\newcommand{\N}{{\mathbb N}}

\newcommand{\R}{{\mathbb R}}
\newcommand{\Q}{{\mathbb Q}}

\newcommand{\ba}{{\mathbf a}}
\newcommand{\bb}{{\mathbf b}}
\newcommand{\bc}{{\mathbf c}}
\newcommand{\be}{{\mathbf e}}
\newcommand{\bp}{{\mathbf p}}

\newcommand{\bt}{{\mathbf t}}
\newcommand{\bx}{{\mathbf x}}
\newcommand{\by}{{\mathbf y}}
\newcommand{\bz}{{\mathbf z}}
\newcommand{\bv}{{\mathbf v}}


\newcommand{\cL}{{\mathcal L}}



\def\conv{\operatorname{convex\ hull}}
\def\d{\operatorname{dist}}

\def\hype{\operatorname{hype}}

\def\np{\operatorname{np}}

\def\ov{\overline}


\newtheorem{thm}{Theorem}[section]

\newtheorem*{introthm*}{Theorem}

\newtheorem{cor}[thm]{Corollary}
\newtheorem{lem}[thm]{Lemma}
\newtheorem{prop}[thm]{Proposition}

\theoremstyle{definition}
\newtheorem{defn}[thm]{Definition}

\newtheorem{ex}[thm]{Example}

\theoremstyle{remark}
\newtheorem{rem}[thm]{Remark}

\numberwithin{equation}{section}  


\definecolor{NordDarkBlack}{HTML}{2E3440}     
\definecolor{NordBlack}{HTML}{3B4252}         
\definecolor{NordMediumBlack}{HTML}{434C5e}   
\definecolor{NordBrightBlack}{HTML}{4C566A}   
\definecolor{NordWhite}{HTML}{E5E9F0}         
\definecolor{NordBrightWhite}{HTML}{ECEFF4}   
\definecolor{NordCyan}{HTML}{8FBCBB}          
\definecolor{NordBrightCyan}{HTML}{88C0D0}    
\definecolor{NordBlue}{HTML}{81A1C1}          
\definecolor{NordBrightBlue}{HTML}{5E81AC}    
\definecolor{NordRed}{HTML}{BF616A}           
\definecolor{NordOrange}{HTML}{D08770}        
\definecolor{NordYellow}{HTML}{EBCB8B}        
\definecolor{NordGreen}{HTML}{A3BE8C}         
\definecolor{NordMagenta}{HTML}{B48EAD}       

\pgfplotsset{posQuad/.append style={grid=both, xlabel={$x$}, ylabel={$y$}, line width=2pt, mark size=3pt,draw=NordWhite}}

\SetKwInput{KwInput}{Input}                
\SetKwInput{KwOutput}{Output}

\title{Computing rational powers of monomial ideals}
\author[Dongre, Drabkin, Lim, Partida, Roy, Ruff, Seceleanu, Tang]{Pratik Dongre, Benjamin Drabkin, Josiah Lim, Ethan Partida, Ethan Roy, Dylan Ruff, Alexandra Seceleanu, Tingting Tang}
\thanks{}
\address{Indian Institute of Information Technology, Nagpur}
\email{mepratikdongre111@gmail.com}
\address{Singapore University of technology and Design}
\email{benjamin\string_drabkin@sutd.edu.sg}
\address{Johns Hopkins University}
\email{jlim76@jhu.edu}
\address{Brown University}
\email{ethan\string_partida@brown.edu}
\address{The University of Texas at Austin}
\email{ethanroy@utexas.edu}
\address{University of Toronto}
\email{dylan.ruff@mail.utoronto.ca}
\address{University of Nebraska--Lincoln}
\email{aseceleanu@unl.edu}
\address{San Diego State University}
\email{ttang2@sdsu.edu}

\keywords{monomial ideals, rational powers, Newton polyhedron, computational algebra, jumping numbers}
\subjclass[2020]{Primary 13F55, 13F20. }

\begin{document}

\begin{abstract}
This paper concerns fractional powers of monomial ideals. Rational powers of a monomial ideal generalize the integral closure operation as well as recover the family of symbolic powers. They also highlight many interesting connections to the theory of convex polytopes. We provide multiple algorithms for computing the rational powers of a monomial ideal. We also introduce a mild generalization allowing real powers of monomial ideals. An important result is that given any monomial ideal $I$, the function taking a real number to the corresponding real power of $I$ is a step function which is left continuous and has rational discontinuity points.  \end{abstract}
 
 \maketitle
 

 \section{Introduction}
 
An ideal of the polynomial ring $R=K[x_1,\ldots, x_d]$ with coefficients in a field $K$ is a {\it monomial ideal} if it is generated by monomials.

In this paper, we study a notion of powers for monomial ideals, where the exponents are allowed to be real numbers as follows:  for $r\in \R$, $r>0$ we define the $r$-th {\em real power} of a monomial ideal, $I$, denoted  $\ov{I^r}$   to be the monomial ideal whose exponent set consists of (integer) lattice points in the $r$-th dilate of the Newton polyhedron of $I$; see \Cref{def:realpower}.  We emphasize that $\ov{I^r}$ is an ideal of the polynomial ring $R$, and in particular the monomial generators of $\ov{I^r}$ have natural number exponents. Thus our notion of real powers of ideals bears no overlap with work taking place in a ring where monomials are allowed to have real number exponents. Prominent examples of work in the latter context are \cite{IS,AS, Miller}.


Our notion of real powers is inspired by, and in fact coincides when $r\in \Q$, with the notion of rational powers,  which can be defined for arbitrary ideals, and have appeared previously in the literature in \cite[\S10.5]{SH},  \cite{Knutson}, \cite{RushRational}, \cite{CiupercaRational}, \cite{CiupercaGolod}, \cite{LewisRational}. In these works, rational powers come up in contexts ranging from valuation theory to intersection theory  and have application to establishing the Golod property. In particular, \cite[Corollary 3.4]{LewisRational} establishes a strong connection between rational powers and the widely studied family of symbolic powers of monomial ideals. The above mentioned applications have motivated and inspired us to seek effective methods for handling rational powers from a computational standpoint. 

The focus of this paper is twofold. First, we handle the task of computing real powers of monomial ideals. One main result in this direction is \Cref{thm:boundeddistance}, where we show that the generators of a specified real power of a monomial ideal can be confined within a bounded convex region depending only on the exponent and the Newton polytope of the ideal. We complement this theoretical insight with a series of algorithms, \Cref{alg:Minkowski}, \Cref{alg:hyperrect}, \Cref{alg:improvedhyperrect}, and \Cref{alg:staircasealg} which exploit different features of the problem to provide practical solutions for computing real powers of monomial ideals.  

Our second aim is to study continuity properties of the exponentiation function where the base is a monomial ideal. Being able to do this provides motivation for working with real powers as opposed to the more common rational powers. We find that the exponentiation function is a step function with rational discontinuity points which we term jumping points. This leads to the conclusion that all distinct real powers of a fixed monomial ideal are given by rational exponents. Our main results on properties for the real exponentiation function of a monomial ideal are contained in \Cref{lem:I>r=I^t_n} (existence of right limits) \Cref{lem:leftcont} (left continuity), \Cref{cor:step} (step function), and \Cref{thm:jumpingrational} (jumping numbers).

Our paper is organized as follows. After introducing the notions of Newton polyhedron and integral closure in \cref{s:NP}, we turn our attention to real powers of monomial ideals in \cref{s:rp} and present algorithms capable of computing these ideals in \cref{s:alg}. We end with studying continuity properties and jumping numbers for exponentiation in \cref{s:jumping}.
 
\section{Background on integral closure and the Newton polyhedron}
\label{s:NP}

Let $\R$ and $\R_+$ denote the real numbers and non-negative real numbers respectively. We denote by $\N$ the set of non negative integers. 

 Let $R=K[x_1,\cdots,x_d]$ be a polynomial ring with coefficients in a field $K$.
Every monomial ideal $I$ in $R$ has a unique minimal monomial generating set denoted $G(I)$. This is a set of monomials that generates $I$ and such that no element of $G(I)$ divides another element of $G(I)$.
It is customary to denote monomials in $R$ by the shorthand notation $\bx^\ba:=x_1^{a_1}\cdots x_d^{a_d}$, where $\ba\in\N^d$. The bijective correspondence between monomials $\bx^\ba$ and lattice points $\ba\in \N^n$ gives rise to convex geometric representations for monomial ideals, chief among which is the Newton polyhedron.

\begin{defn}
For any monomial ideal $I$ denote by $\cL(I)$ the set of exponent vectors of all monomials in $I$
\[\cL(I)=\{\ba \mid \bx^\ba\in I\}.\]
The {\em Newton polyhedron} of $I$, denoted $NP(I)$, is the convex hull of $\cL(I)$ in $\R^d$
\[
NP(I)=\conv \cL(I)= \conv ( \{\ba \mid \bx^\ba\in I\}).
\]
The {\em Newton polytope} of $I$, denoted $\np(I)$,  is the convex hull of the exponent vectors of a minimal monomial generating set  for $I$. 
\[
\np(I)=\conv ( \{\ba \mid \bx^\ba\in G(I)\}).
\]
\end{defn}

Notice that Newton polyhedra are unbounded, while Newton polytopes are bounded convex bodies. Both are lattice polyhedra, meaning that their vertices have integer coordinates. Their relationship can be described using the notion of Minkowski sum.

\begin{defn}
The {\em Minkowski sum} of subsets $A,B\subseteq \R^n$ is 
\[A+B=\{\ba+\bb \mid \ba \in A, \bb \in B\}.\]
We also write $A-B=\{\ba-\bb \mid \ba \in A, \bb \in B\}$.
\end{defn}

The precise relationship between the Newton polyhedron and the Newton polytope of $I$, established for example in \cite[Lemma 5.2]{cooper2017symbolic}, is given by the Minkowski sum decomposition
\begin{equation}
    \label{eq:NP=np+orthant}
    NP(I)=\np(I)+\R^d_+,
\end{equation}
where  $\R^d_+=\{(a_1,\ldots,a_d)\in \R^d \mid a_i\geq 0\}$ denotes the positive orthant in $\R^d$. 

By the version of Carath\'eodory's theorem in \cite[Theorem 5.2]{cooper2017symbolic}, any point $\ba\in NP(I)$ is written as
\begin{equation}
    \label{eq:Caratheodory}
\ba=\lambda_1\bt_1+\cdots+\lambda_d\bt_d+c_1\be_1+\cdots+c_d\be_d,
\end{equation}
with $\lambda_i, c_j\geq 0, \sum_{i=1}^d \lambda_i=1$, $\bt_1, \ldots, \bt_d\in \np(I)$, and $\be_1,\ldots, \be_d$ standard basis vectors in $\R^d$. Thus one can reformulate equation \eqref{eq:NP=np+orthant} using coordinatewise inequalities as
\begin{equation}
    \label{eq:NP}
    NP(I)=\{\ba \in \R^d \mid \ba \geq \bb \text{ for some }\bb\in \np(I) \}
\end{equation}

While the containment $\cL(I)\subseteq NP(I)\cap \N^d$ holds by definition, in general the sets of lattice points $\cL(I)$ and $NP(I)\cap \N^d$ need not be equal.  We recall below that the set of lattice points in $NP(I)$ is in fact given by $NP(I)\cap \N^d=\cL(\overline{I})$, where $\overline{I}$ is the integral closure of $I$.

\begin{defn}
The {\em integral closure} of an ideal $I$ of a ring $R$ is the set of elements  $y\in R$ that satisfy an equation of integral dependence of the form
\[y^n +m_1y^{n-1} +\cdots +m_{n-1}y+m_n =0 \text{ where } m_i\in I^i, n\geq 1.\]
The integral closure of $I$ is denoted $\overline{I}$.
\end{defn}

\begin{rem}
\label{rem:intclosure}
It is shown in \cite{SH} that the description is significantly simpler if $I$ is a monomial ideal. In this case one can give an alternate definition for the integral closure 
\begin{equation}
\label{eq:intclalt}
\overline{I}=\left (\{ \bx^\ba \mid \bx^{n\ba}\in I^n \text{ for some } n\in \N\}\right).
\end{equation}
\end{rem}

We recall below how the integral closure of a monomial ideal $I$ can be described in terms of its Newton polyhedron. We also show that  the minimal generators of $\ov{I}$  lie at bounded lattice distance from the Newton polytope $\np(I)$.
In the following we use the notion of lattice (or taxicab) distance between points in $\ba,\bb \in \R^d$ defined as $\d(\ba,\bb)=\sum_{i=1}^d |a_i-b_i|$.

\begin{lem}
\label{lem:intclosure}
Let $I$ be a monomial ideal in $K[x_1,\ldots,x_d]$. Then
\begin{enumerate}
\item   $NP(I)\cap \N^d=\cL(\ov{I})$,
\item $NP(\ov{I})=NP(I)$,
 \item (compare \cite[Proposition 1.4.9]{SH}) if $\bx^\ba\in G(\overline{I})$, then there exists $\bb\in \np(I)$  such that  $\ba\geq \bb$ and
 \[\sum_{i=1}^d (a_i-b_i) \leq d-1.\]
 \end{enumerate}
\end{lem}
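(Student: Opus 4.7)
For (1), the plan is to establish the two inclusions separately using the characterization of integral closure in \eqref{eq:intclalt}. The inclusion $\cL(\ov{I}) \subseteq NP(I)\cap \N^d$ comes quickly: if $\bx^{n\ba}\in I^n$ then $n\ba \in NP(I^n) = n\cdot NP(I)$, where the identity $NP(I^n) = n\cdot NP(I)$ follows from $\np(I^n) = n\cdot \np(I)$ (the $n$-fold Minkowski sum of a convex set equals its $n$-th dilate) combined with \eqref{eq:NP=np+orthant}, so scaling yields $\ba \in NP(I)$. The reverse inclusion is the crux: given $\ba \in NP(I)\cap \N^d$, I would invoke the Carath\'eodory expansion \eqref{eq:Caratheodory} with the $\bt_i$ taken among the generators of $I$, and observe that the set of admissible coefficient vectors $(\lambda_i, c_j)$ is a non-empty bounded rational polytope because $\ba$ and the $\bt_i$ have integer entries. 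A rational point in this polytope, cleared of denominators, yields an integer $n$ with $n\ba = \sum_i (n\lambda_i)\bt_i + \sum_j (nc_j)\be_j$, exhibiting $\bx^{n\ba}$ as a product of $n$ generators of $I$ times powers of the variables, so $\bx^{n\ba} \in I^n$ and therefore $\bx^\ba \in \ov{I}$ by \eqref{eq:intclalt}.

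Statement (2) then follows formally from (1): using that $NP(I)$ is convex, we have $NP(\ov{I}) = \conv \cL(\ov{I}) = \conv(NP(I)\cap \N^d) \subseteq NP(I)$, while the reverse containment $NP(I) \subseteq NP(\ov{I})$ is immediate from $I \subseteq \ov{I}$.

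Part (3) is the main challenge. My plan is to pick $\bb \in \np(I)$ with $\bb \leq \ba$ that maximizes $\sum_i b_i$; compactness of the feasible polytope $F = \np(I)\cap \{\bw \mid \bw \leq \ba\}$ guarantees existence, and linear programming lets us take $\bb$ at a vertex of $F$. The minimality of $\ba$ in $G(\ov{I})$ will be used twice. First, no coordinate $c_k := a_k - b_k$ can satisfy $c_k \geq 1$: otherwise $\ba - \be_k \geq \bb$ would lie in $NP(I) \cap \N^d = \cL(\ov{I})$ and be strictly below $\ba$, contradicting minimality. Second, no lattice point of $\np(I)$ that is coordinatewise $\leq \ba$ can be distinct from $\ba$, by the same reasoning applied via part (1). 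Next, I would argue that at a vertex $\bb$ of $F$, either $\bb$ is a vertex of $\np(I)$, or at least one box constraint $b_k = a_k$ is active (i.e., $S := \{k \mid b_k = a_k\} \neq \emptyset$): otherwise $\bb$ would lie in the relative interior of a positive-dimensional face of $\np(I)$ strictly inside the box, contradicting that $\bb$ is a vertex of $F$. In the first alternative, $\bb$ is a lattice point of $\np(I)$ with $\bb \leq \ba$, forcing $\bb = \ba$ and $\sum_i c_i = 0$. In the second, combining $c_k = 0$ for $k \in S$ with $c_k < 1$ for $k \notin S$ (from the minimality argument) yields $\sum_k c_k < d - |S| \leq d - 1$. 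The main obstacle will be justifying this case analysis cleanly, in particular the dichotomy at a vertex of $F$ when $\np(I)$ is not full-dimensional.
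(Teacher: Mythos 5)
Your proof of part (3) is correct but takes a genuinely different route from the paper. The paper chooses $\by\in\np(I)$ with $\ba=\lceil\by\rceil$, then obtains $\bb$ as the point where a segment from $\by$ to a vertex of $\np(I)$ exits the hypercube $\ba-H_d$; the inequality $a_i-b_i\leq 1$ comes from $\bb$ lying in that cube, and the fact that some $b_i=a_i$ comes from $\bb$ being on its boundary. You instead take $\bb$ a vertex of the truncated polytope $F=\np(I)\cap\{\bw\mid\bw\leq\ba\}$: the strict bound $a_k-b_k<1$ is extracted directly from minimality (no hypercube needed), and the ``free'' coordinate with $a_k-b_k=0$ is produced by the dichotomy ``either $\bb$ is a vertex of $\np(I)$ or some box constraint is tight.'' That dichotomy is fine and your flagged worry about low-dimensional $\np(I)$ is unfounded: if $\bb$ is not a vertex of $\np(I)$ then $\bb$ lies in the relative interior of a face of dimension $\geq 1$, and if moreover every $b_k<a_k$, perturbing along the face in both directions stays in $F$, contradicting that $\bb$ is an extreme point of $F$, regardless of $\dim\np(I)$. (One cosmetic point: the ``maximize $\sum b_i$'' clause does no work; any vertex of $F$ suffices. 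Also, when $S=[d]$, the displayed chain $\sum c_k< d-|S|$ degenerates to $0<0$ — just note $\sum c_k=0$ directly in that case.) Your argument is if anything a bit cleaner than the paper's, replacing the segment-intersection construction with an off-the-shelf LP vertex.

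There is, however, a genuine flaw in your proof of part (1), which the paper simply cites. For the inclusion $\cL(\ov I)\subseteq NP(I)\cap\N^d$ you invoke the identity $\np(I^n)=n\cdot\np(I)$, ``since the $n$-fold Minkowski sum of a convex set equals its $n$-th dilate.'' But $\np(I^n)$ is the convex hull of the exponents of a \emph{minimal} generating set of $I^n$, which is not in general the $n$-fold Minkowski sum of $\np(I)$. Take $I=(x^3,y^3,x^2y^2)$: then $2\cdot\np(I)=\conv\{(6,0),(0,6),(4,4)\}$, yet $x^4y^4$ is not a minimal generator of $I^2$ (it is divisible by $x^3y^3$), so $(4,4)\notin\np(I^2)=\conv\{(6,0),(5,2),(3,3),(2,5),(0,6)\}$, and the two polytopes differ. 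What you actually need and what is true is $NP(I^n)=n\cdot NP(I)$ at the level of Newton \emph{polyhedra}; the repair is to run the Minkowski-sum argument on $NP$ rather than $\np$: $\cL(I)+\cdots+\cL(I)\subseteq\cL(I^n)$ gives $n\cdot NP(I)\subseteq NP(I^n)$, while every monomial of $I^n$ is divisible by a product of $n$ generators, giving $NP(I^n)\subseteq n\cdot\np(I)+\R_+^d=n\cdot NP(I)$. With that replacement your proof of (1) is fine, and parts (2) and (3) as written are correct.
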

\begin{proof}
Statement (1) is well-known; see for example \cite[Proposition 1.4.6]{SH}. 

(2) follows from (1) by noticing that, since $NP(I)$ is a lattice polyhedron we have 
\[ NP(I)=\conv(NP(I)\cap \N^d)=\conv(\cL(\ov{I}))=NP(\ov{I}).\]

(3) 
If $\ba \in \np(I)$, the choice $\bb=\ba$ works as claimed. We may thus assume $\ba\not\in \np(I)$. 
By \eqref{eq:NP} there is $\by \in \np(I)$ such that the inequality $\ba\geq \by$ is satisfied coordinatewise. Since $\ba\in \N^d$, we have that $\ba\geq \lceil \by \rceil :=(\lceil y_1\rceil ,\ldots, \lceil y_d\rceil)$ and since $\lceil \by \rceil \geq \by$, we have $\lceil \by \rceil\in NP(I)$. As $\bx^\ba$ is a minimal generator of $\overline{I}$, it follows that $\ba= \lceil \by \rceil$. 

 Denote the unit hypercube in $\R^d$ by $H_d$; it has vertices $\sum_{i\in S\subseteq[d]}\be_i$. Since $x^\ba$ is a minimal generator of $\overline{I}$, it follows  that the only vertex of $\ba-H_d$ that is in $NP(I)$ is $\ba$. Moreover, since the only lattice points in $\ba-H_d$ are its vertices, the only lattice point in $(\ba-H_d)\cap NP(I)$ is $\ba$. Finally, we have  $\by\in \ba-H_d$ because $\ba=\lceil\by\rceil$.

Let  $\bz \in \N^d$ be any vertex of $\np(I)$. From the previous considerations, we have $\bz\not\in \ba-H_d$.
Since $\np(I)$ is convex, the line segment $[\by, \bz]$ is contained in $\np(I)$. Let $\bb$ be the intersection point of this line segment with the boundary of the polytope $\ba-H_d$. Such an intersection point exists since $\by$ is inside and $\bz$ is outside $\ba-H_d$. Then $\bb \in \np(I)$ and $\lceil \bb\rceil$ is a vertex of $\ba-H_d$ that belongs to $NP(I)$; thus we have $\lceil \bb \rceil =\ba$. Furthermore, since $\bb\neq \ba-\mathbf{1}$, and $\bb$ is on the boundary of $\ba-H_d$, it follows that for some $1 \leq i \leq d$ we have $b_i = a_i$. Hence  we obtain $\sum_{i=1}^d (a_i-b_i) \leq d-1$, as claimed.

\end{proof}

\section{Real powers of monomial ideals}
\label{s:rp}

We now discuss powers of monomial ideals with real exponents, termed real powers, and their relationship to integral closure.

\begin{defn}
\label{def:realpower}
Fix a real number $r\geq 0$. We define the $r$-th {\em real power} of a monomial ideal, $I$,    to be 
\[\ov{I^r} =\left(\{\bx^\ba \mid  \ba \in r\cdot NP(I) \cap \N^d \}\right).\] When $r\in\Q$ we will refer to $\ov{I^r}$ as the $r$-th {\em rational power} of $I$.
\end{defn}

Rational powers of monomial ideals have appeared previously in the literature under the following definition and notation, see \cite[Definition 10.5.1]{SH}:  the $r$-th {\em rational power} of an arbitrary ideal $I$ of a ring $R$ for $r=\frac{p}{q}$ with $p,q\in\N, q\neq 0$ is the ideal 
\begin{equation}
\label{eq:altdef}
I_r:=\{y\in R \mid  y^q \in \ov{I^p}\},
\end{equation}
 where $\ov{I^p}$ denotes the integral closure of the $p$-th ordinary power of $I$, $I^p$. In the following we show that 
these two definitions agree, i.e., $I_r=\ov{I^r}$ whenever $r\in\Q$ and furthermore for natural exponents $r\in\N$ the $r$-th real power agrees with the integral closure of the $r$-th ordinary power of $I$, $I^r$. 

Our notation for real powers deviates from that in \eqref{eq:altdef}, which is more established in the literature, in favor of being intentionally consistent with the notation for integral closure, since these notions agree for $r\in\N$ as shown in the following lemma.

\begin{lem}
\label{lem:rational=intclosed}
Let $I$ be a monomial ideal. Then 
\begin{enumerate}
\item If $r\in \N$,  then the $r$-th real power of $I$ is equal to the integral closure of the $r$-th ordinary power $I^r$. In particular, the first rational power of $I$, $\overline{I^1}$, is the integral closure of $I$. Moreover, the $r$-th real power of $I$ is integrally closed.
\item  If $r\in \Q$  then the $r$-th real power of $I$ in \Cref{def:realpower} agrees with the  $r$-th rational power of $I$, $I_r$,  in  \eqref{eq:altdef}.
\end{enumerate}
\end{lem}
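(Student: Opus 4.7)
The plan is to translate everything to Newton polyhedra and apply \Cref{lem:intclosure}(1). For part (1) with $r\in\N$, I would first establish the Minkowski-sum identity $NP(I^r)=r\cdot NP(I)$, which follows by induction from $NP(IJ)=NP(I)+NP(J)$; the latter is immediate from the fact that multiplying monomials adds their exponent vectors, combined with the decomposition \eqref{eq:NP=np+orthant}. Applying \Cref{lem:intclosure}(1) to $I^r$ then yields $\cL(\ov{I^r})=NP(I^r)\cap\N^d=r\cdot NP(I)\cap\N^d$, which by \Cref{def:realpower} is exactly the exponent set of the $r$-th real power. Since a monomial ideal is determined by the set of exponents of monomials it contains, the $r$-th real power agrees with the integral closure of $I^r$, and the case $r=1$ is the stated corollary.

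For the integrally-closed claim, which I read as applying to arbitrary real $r\geq 0$ (the case $r\in\N$ being already subsumed), the key observation is that $r\cdot NP(I)=r\cdot\np(I)+\R^d_+$ retains the orthant-translation property, so $r\cdot NP(I)\cap\N^d$ is closed under componentwise addition by elements of $\N^d$ and therefore coincides with $\cL(\ov{I^r})$ in full, not merely on generators. By convexity of $r\cdot NP(I)$, any lattice point of $NP(\ov{I^r})=\conv(r\cdot NP(I)\cap\N^d)$ already lies in $r\cdot NP(I)\cap\N^d$, so $NP(\ov{I^r})\cap\N^d=\cL(\ov{I^r})$. A second application of \Cref{lem:intclosure}(1), this time to $\ov{I^r}$, then gives $\ov{\ov{I^r}}=\ov{I^r}$.

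For part (2), I would first verify that $I_r$ from \eqref{eq:altdef} is a monomial ideal whenever $I$ is. Given $y\in I_r$ with $y^q\in\ov{I^p}$, fix a monomial order; because $R$ is a domain, $\operatorname{LT}(y)^q=\operatorname{LT}(y^q)\in\ov{I^p}$, where the last containment uses that the leading term of any element of a monomial ideal lies in it. Hence $\operatorname{LT}(y)\in I_r$, and an induction on the number of terms of $y$ (using that $I_r$ is an ideal) shows every monomial of $y$ belongs to $I_r$. Once $I_r$ is known to be monomial, a chain of equivalences finishes the proof: $\bx^\ba\in I_r$ iff $\bx^{q\ba}\in\ov{I^p}$ iff, by part (1) applied to the integer $p$, $q\ba\in p\cdot NP(I)\cap\N^d$, iff $\ba\in (p/q)\cdot NP(I)\cap\N^d=r\cdot NP(I)\cap\N^d$, iff $\bx^\ba\in\ov{I^r}$.

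The main obstacle is the verification that $I_r$ is a monomial ideal; the leading-term argument is routine in a polynomial domain but must be carried out explicitly to reduce to the monomial case. A secondary care point is that for non-integer $r$ the dilate $r\cdot NP(I)$ need not be a lattice polyhedron, yet it still admits the decomposition $r\cdot\np(I)+\R^d_+$, and it is exactly this decomposition that makes the integrally-closed argument go through uniformly in $r$.
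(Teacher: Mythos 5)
Your proof is correct and follows essentially the same route as the paper: translate to Newton polyhedra via \Cref{lem:intclosure}(1), using $NP(I^r)=r\cdot NP(I)$ for $r\in\N$ in part (1), and in part (2) run the chain of equivalences $\bx^\ba\in I_r\iff q\ba\in p\cdot NP(I)\iff \ba\in r\cdot NP(I)$. You add two things worth noting. First, in part (2) you explicitly verify that $I_r$ is a monomial ideal via the leading-term and induction-on-terms argument; the paper's proof compares only the \emph{monomials} in $I_r$ and $\ov{I^r}$, which establishes equality only once one knows $I_r$ is generated by monomials --- a fact not immediate from \eqref{eq:altdef}, which quantifies over arbitrary ring elements $y$. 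Your check fills this small but genuine gap. Second, you prove the integrally-closed claim uniformly for all $r\in\R_+$ by combining the orthant-translation decomposition $r\cdot NP(I)=r\cdot\np(I)+\R^d_+$ with convexity, whereas the paper's chain of equivalences in part (1) yields it only when $r\in\N$ (the more general case then follows indirectly from the later continuity results). Both refinements are sound, and otherwise your argument is a more detailed elaboration of the paper's.
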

\begin{proof}
(1) By definition, a monomial $\mathbf{x}^\mathbf{a}$ is an element of the $r$-th real power of $I$ if and only if $\mathbf{a} \in r \cdot NP(I)$. Noting that $r\cdot NP(I) = NP(I^r)$ if $r\in\N$, the latter condition is equivalent to $\mathbf{a} \in NP(I^r)$. Now by \Cref{lem:intclosure} (1), we have $\mathbf{a} \in NP(I^r)\cap\N^d$ if and only if $\mathbf{x}^\mathbf{a}$ is an element of the integral closure of $I^r$ if and only if  $\mathbf{x}^\mathbf{a}$ is an element of the integral closure of $\overline{I^r}$.

(2) Let $r=\frac{p}{q}$ with $p,q\in\N, q\neq 0$ and let $\bx^\ba$ be a monomial. By \eqref{eq:altdef}, $\bx^\ba\in I_r$ holds if and only if we have $\bx^{q\ba}\in \ov{I^p}$,  equivalently $q\ba\in NP(\ov{I^p})=NP(I^p)=pNP(I)$. In turn, the last assertion is equivalent to  $\ba \in rNP(I)\cap\N^d$ and  by \Cref{def:realpower} this holds if and only if $\bx^\ba\in \ov{I^r}$. 
\end{proof}

Using \Cref{lem:intclosure}, for $r\in\Q_+$ we aim to confine the minimal generators of $\overline{I^r}$ to a bounded convex set, which will be obtained by Minkowski sum. In order to define this convex set we introduce the {\em unit simplex} in $d$-dimensional space,
\[
S_d=\{\ba=(a_1,\ldots, a_d)\in \R^d \mid a_1+\cdots+a_d\leq 1, a_i\geq 0 \text{ for } 1\leq i\leq d\}.
\]
In the metric space $\R^d$ endowed with the lattice distance, the unit simplex is the non negative portion of the ball of radius one centered at the origin. Denoting the origin in $\R^d$ by $\bf 0$, this observation yields an alternate description 
\[
S_d=\{\ba\in \R^d \mid \ba\geq \bf 0, \ \d(\ba,\bf 0)\leq 1\}.
\]

\begin{rem}
\label{rem:naturalboundeddistance}
\Cref{lem:intclosure} (3) can be reformulated using this notation as follows:  If $I$ is a monomial ideal and $\bx^\ba\in G(\overline{I})$, then $\ba \in \np(I)+(d-1)\cdot S_d$.
\end{rem}

The following technical result shall prove very useful for our purposes.

\begin{lem}
\label{lem:rationalmingen}
Let $\mathbf{x}^\mathbf{a}$ be a minimal generator of $\overline{I^r}$, where $r=\frac{p}{q}$ is a positive rational number. Then there exists a minimal generator $\mathbf{x}^\mathbf{b}$ of $\overline{I^p}$ such that  $q\ba-\bb \in d(q-1)\cdot S_d$.
\end{lem}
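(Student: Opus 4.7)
The plan is to derive both the coordinatewise non-negativity and the coordinate-sum bound separately, using the characterization of real powers in terms of Newton polyhedra from \Cref{lem:intclosure}.

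First, I would establish that $q\ba \in \cL(\overline{I^p})$. Since $\bx^\ba$ is a minimal generator of $\overline{I^r}$, we have $\ba \in r\cdot NP(I)\cap \N^d$; scaling by $q$ yields $q\ba \in p\cdot NP(I)\cap \N^d = NP(I^p)\cap \N^d$, which equals $\cL(\overline{I^p})$ by \Cref{lem:intclosure}(1). Hence $\bx^{q\ba}\in \overline{I^p}$, and it must be divisible by some minimal generator $\bx^\bb \in G(\overline{I^p})$. This gives $q\ba - \bb \geq \mathbf{0}$ coordinatewise, handling half of the membership in $d(q-1)\cdot S_d$.

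For the coordinate-sum bound, I would argue by contradiction at the level of a single coordinate. Suppose some index $i_0$ satisfies $qa_{i_0} - b_{i_0} \geq q$. Then $a_{i_0} \geq 1$ (using $q \geq 1$ and $a_{i_0}\in\N$), so $\ba' := \ba - \be_{i_0}$ lies in $\N^d$ and satisfies $q\ba' \geq \bb$ coordinatewise. Thus $\bx^\bb$ divides $\bx^{q\ba'}$, giving $\bx^{q\ba'} \in \overline{I^p}$. Running the chain of equivalences from \Cref{lem:intclosure}(1) in reverse, we obtain $q\ba' \in NP(I^p) = p\cdot NP(I)$, so $\ba' \in r\cdot NP(I)\cap \N^d$, that is, $\bx^{\ba'}\in \overline{I^r}$. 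But $\bx^{\ba'}$ properly divides $\bx^\ba$, contradicting the minimality of $\bx^\ba$ as a generator of $\overline{I^r}$. Consequently $qa_i - b_i \leq q-1$ for every $i$, and summing gives $\sum_{i=1}^d (qa_i - b_i) \leq d(q-1)$, as required.

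The main subtlety is recognizing that the right invariant to control is the per-coordinate gap $qa_i - b_i$, rather than attempting to deduce an aggregate bound by applying \Cref{lem:intclosure}(3) to $\bb$ inside $\np(I^p)$. Bounding coordinate-by-coordinate turns the problem into a clean minimality contradiction using the shift $\ba - \be_{i_0}$, and the factor of $d$ appearing in $d(q-1)$ then emerges automatically from summing $d$ coordinate bounds of size $q-1$.
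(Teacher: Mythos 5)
Your proof is correct and follows essentially the same strategy as the paper: divide $\bx^{q\ba}$ by a minimal generator $\bx^\bb$ of $\overline{I^p}$, and whenever some coordinate gap $qa_{i_0}-b_{i_0}$ reaches $q$, pass to $\ba'=\ba-\be_{i_0}$ and contradict minimality of $\bx^\ba$ in $\overline{I^r}$. The only organizational difference is that the paper assumes the sum $\sum_i(qa_i-b_i)$ exceeds $d(q-1)$ and extracts a bad index by pigeonhole, whereas you prove the per-coordinate bound $qa_i-b_i\le q-1$ for every $i$ directly and then sum; your version is marginally cleaner and records the slightly stronger coordinatewise inequality, but the underlying mechanism is identical.
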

\begin{proof}

By \Cref{lem:rational=intclosed} (2), we obtain $\bx^{q\ba} \in \overline{I^p}$. 
Thus there exists a minimal generator  $\bx^\bb \in G(\overline{I^p})$ such that $\bx^\bb$ divides $\bx^{q\ba}$. This implies $\bb \leq q\ba$, that is, $b_i \leq qa_i$ for all $1 \leq i \leq d$. 
Suppose that $q\ba-\bb \not \in d(q-1)\cdot S_d$. Then  the inequality
\[
\sum_{i=1}^d(qa_i-b_i)\geq d(q-1)+1
\]
 follows by integrality. Applying the pigeon-hole principle, we find that there must exist  $i_0\in\{1,\ldots,d\}$ such that $qa_{i_0}-b_{i_0} \geq q$. Rewriting, we get that $q(a_{i_0}-1) \geq b_{i_0}$.
We can now set $\ba'=\ba-\be_{i_0}$ and with this notation we find
\[ \bb \leq q(a_1, \ldots, a_{i_{0-1}}, a_{i_0}-1, a_{i_{0+1}}, \ldots, a_d) =q\ba'.\] 
Thus $\bx^\bb$ divides $\bx^{q\ba'}$ and $\bx^{q\ba'}$ is an element of $\overline{I^p}$. Applying \Cref{lem:rational=intclosed} (2) again, this yields that, $\bx^{\ba'}\in \overline{I^r}$, which contradicts that $\bx^\ba$ is a minimal generator of $\overline{I^r}$.
\end{proof}

We are now able to describe a bounded convex set which contains the minimal generators of a rational power for a monomial ideal. The following result constitutes the basis for our Minkowski algorithm described in \Cref{alg:Minkowski}. See also \Cref{ex:Minkowski} for an illustration of the convex set $\mathcal{C}(I,r)$ defined below.

\begin{thm}
\label{thm:boundeddistance}
Let $I$ be a monomial ideal in $K[x_1,\ldots,x_d]$. If $r=\frac{p}{q}$ is a positive rational number and $\bx^\ba\in G(\overline{I^r})$, then $\ba$ is in the following bounded convex set
\begin{equation}
\label{eq:C}
\mathcal{C}(I,r)=r\cdot \np(I)+\left(d-\frac{1}{q}\right)\cdot S_d.
\end{equation}
Moreover, if $\ba\in \mathcal{C}(I,r)\cap\N^d$, then $\bx^\ba\in \ov{I^r}$ and thus $\ov{I^r}=(\{\bx^\ba\mid \ba\in \mathcal{C}(I,r)\cap\N^d)\}$.
\end{thm}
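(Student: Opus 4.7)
The plan is to chain together \Cref{lem:intclosure}(3) (equivalently \Cref{rem:naturalboundeddistance}) and \Cref{lem:rationalmingen} by Minkowski-sum arithmetic. Given $\bx^\ba\in G(\overline{I^r})$ with $r=p/q$, \Cref{lem:rationalmingen} supplies $\bx^\bb\in G(\overline{I^p})$ with $q\ba-\bb\in d(q-1)\cdot S_d$. Since $\np(I^p)=p\cdot \np(I)$, applying \Cref{lem:intclosure}(3) to the monomial ideal $I^p$ yields some $\bc\in p\cdot \np(I)$ with $\bb-\bc\in (d-1)\cdot S_d$, i.e.\ $\bb\in p\cdot \np(I)+(d-1)\cdot S_d$.

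Adding these two Minkowski memberships, I would write
\[
q\ba \;=\; \bb+(q\ba-\bb) \;\in\; p\cdot \np(I)+(d-1)\cdot S_d + d(q-1)\cdot S_d \;=\; p\cdot \np(I)+(dq-1)\cdot S_d,
\]
using that scaling a convex set commutes with Minkowski sum and that $\lambda\cdot S_d+\mu\cdot S_d=(\lambda+\mu)\cdot S_d$ for $\lambda,\mu\geq 0$ (both $S_d$-factors are dilates of the same convex set containing the origin). Dividing through by $q>0$ gives
\[
\ba \in \tfrac{p}{q}\cdot \np(I)+\tfrac{dq-1}{q}\cdot S_d \;=\; r\cdot \np(I)+\bigl(d-\tfrac{1}{q}\bigr)\cdot S_d \;=\; \mathcal{C}(I,r),
\]
which is the desired containment.

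For the second statement, suppose $\ba\in \mathcal{C}(I,r)\cap \N^d$ and decompose $\ba=\bv+\bw$ with $\bv\in r\cdot \np(I)$ and $\bw\in (d-1/q)\cdot S_d$. Since every element of $S_d$ is componentwise nonnegative, $\bw\geq \mathbf{0}$, so $\ba\geq \bv$. Then \eqref{eq:NP}, applied after scaling by $r$, gives $\ba\in r\cdot NP(I)$; combined with $\ba\in\N^d$, \Cref{def:realpower} yields $\bx^\ba\in \ov{I^r}$. The final equality $\ov{I^r}=(\{\bx^\ba\mid \ba\in \mathcal{C}(I,r)\cap\N^d\})$ follows by double containment: the generating set on the right sits inside $\ov{I^r}$ by what was just shown, and contains $G(\ov{I^r})$ by the first half of the theorem.

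I do not anticipate a real obstacle here; the only subtlety is to keep the Minkowski-sum bookkeeping clean and to invoke $\np(I^p)=p\cdot \np(I)$ (which follows from the fact that the minimal generators of $I^p$ correspond, up to redundancy, to $p$-fold sums of minimal generators of $I$, and redundant generators sit inside the convex hull of the extreme ones). Everything else is a direct assembly of \Cref{lem:intclosure}(3) and \Cref{lem:rationalmingen} together with equation \eqref{eq:NP}.
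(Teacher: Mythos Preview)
Your proposal is correct and matches the paper's own proof essentially line for line: both invoke \Cref{lem:rationalmingen} and then \Cref{rem:naturalboundeddistance} (i.e.\ \Cref{lem:intclosure}(3)) applied to $I^p$, combine via Minkowski-sum arithmetic, and divide by $q$; for the second statement both use $S_d\subseteq\R_+^d$ together with \eqref{eq:NP=np+orthant}/\eqref{eq:NP} to embed $\mathcal{C}(I,r)$ in $r\cdot NP(I)$. The only difference is that you spell out the identities $\lambda S_d+\mu S_d=(\lambda+\mu)S_d$ and $\np(I^p)=p\cdot\np(I)$ explicitly, whereas the paper uses them silently.
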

\begin{proof}
By \Cref{lem:rationalmingen},  there exists a minimal generator of $\overline{I^p}$, $\mathbf{x}^\mathbf{b}$, such that 
\[q\ba-\bb \in d(q-1)\cdot S_d\] and from \Cref{rem:naturalboundeddistance} applied to the monomial ideal $I^p$ we have that 
\[\bb\in \np(I^p)+\left(d-1\right)\cdot S_d=p\cdot \np(I)+\left(d-1\right)\cdot S_d.\] 
Combining the displayed statements, we obtain
\begin{align*}
q\ba &\in p\cdot \np(I)+(d-1)\cdot S_d+ d(q-1)\cdot S_d \\
\iff \ba &\in \frac{p}{q}\cdot \np(I) + \frac{d-1}{q} \cdot S_d + \frac{d(q-1)}{q} \cdot S_d \\
\iff \ba &\in r\cdot \np(I)+\left(d-\frac{1}{q}\right)\cdot S_d.
\end{align*}
Finally, since $S_d\subseteq \R_+^d$, we have that $\mathcal{C}(I,r)\subseteq r\cdot NP(I)$ by \eqref{eq:NP=np+orthant}. Thus if $\ba\in \mathcal{C}(I,r)\cap\N^d$, then $\ba\in r\cdot NP(I)$ which yields  $\bx^\ba\in \ov{I^r}$ according to \Cref{def:realpower}. The identity $\ov{I^r}=(\bx^\ba\mid \ba\in \mathcal{C}(I,r)\cap\N^d)$ follows from the previous assertions.
\end{proof}

\begin{rem}
While the previous theorem does not require the rational number $r=\frac{p}{q}$ to have $\gcd(p,q)=1$, in applications is desirable to work with the reduced form of $r$ in order to obtain the smallest possible region $\mathcal{C}(I,r)$.
\end{rem}

\section{Algorithms for computing real powers}
\label{s:alg}

Several algorithms are proposed below for computing real powers of monomial ideals.

Our algorithms rely on several auxiliary computational tasks, which are highly non trivial, but can be performed currently by computer algebra systems such as  \cite{M2} or \cite{4ti2}. Specifically, we assume that independent routines are used to compute the Newton polyhedron or polytope for a given monomial ideal. For this reason, we take these convex bodies as input for our algorithms. For \Cref{alg:Minkowski} we additionally assume the existence of a routine that finds all the lattice points in a bounded convex polytope. This task is discussed in detail in \cite{LattE}. 

\subsection{Minkowski Algorithm} 

Our first algorithm uses the ideas presented in  \Cref{thm:boundeddistance} and illustrated in \Cref{ex:Minkowski} to confine the generators of a real power $\ov{I^r}$ within a convex region of bounded lattice distance from the Newton polytope $\np(I)$.

\smallskip

\begin{algorithm}[H]
\label{alg:Minkowski}
\DontPrintSemicolon
  \SetKw{KwTo}{in}
  \KwInput{the Newton polytope $\np(I)$ of an ideal $I$, a rational number $r=\frac{p}{q}\in \Q_+$}
  \KwOutput{a list of monomial generators for the ideal $\overline{I^r}$}
  
  \tcc{Scaled newton polytope of I}
  
  scalednp := $r\cdot np(I)$
 
  \tcc{Bounded convex set, as given by \Cref{thm:boundeddistance}}
  
  $d := $ dimension of the polynomial ring containing $I$
  
  simplex := $d$-dimensional simplex with vertices at $\{\mathbf{0},(d-\frac{1}{q})\mathbf{e}_1, \ldots, (d-\frac{1}{q})\mathbf{e}_d\}$.
 
  $C$ := minkowskiSum(scalednp, simplex)
 
  \tcc{Find all lattice points and their monomial counterpart}
  
  exponentVectors := latticePoints($C$) \label{minklatpts}
  
  Initialize generators := $\emptyset$
  
  \For{$\bb$ \KwTo  exponentVectors}
    {
        generators := append($\bx^\bb$, generators)
    }
  \tcc{Return the possibly non minimal monomial generators}
   
  Return generators.
\caption{Minkowski Sum algorithm}
\end{algorithm}
\smallskip

\begin{prop}
If $I$ is a monomial ideal of a $d$-dimensional polynomial ring and $r\in\R_+$, then \Cref{alg:Minkowski} returns a not necessarily minimal set of monomial generators for $\overline{I^r}$.
\end{prop}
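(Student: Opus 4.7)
The plan is to invoke \Cref{thm:boundeddistance} essentially verbatim, showing that the convex body $C$ built by the algorithm is exactly the set $\mathcal{C}(I,r)$ from that theorem, and therefore its lattice points correspond to a generating set of $\overline{I^r}$ (possibly with redundancies).

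First, I would identify the simplex constructed by the algorithm with the scaled unit simplex. The vertices $\mathbf{0}, (d-\tfrac{1}{q})\be_1, \ldots, (d-\tfrac{1}{q})\be_d$ are precisely the vertices of $(d-\tfrac{1}{q}) \cdot S_d$, so the \textbf{simplex} variable coincides with $(d-\tfrac{1}{q})\cdot S_d$. Combined with the fact that \textbf{scalednp} $= r\cdot \np(I)$, the Minkowski sum computed in the algorithm satisfies
\[
C = r\cdot \np(I) + \left(d-\tfrac{1}{q}\right)\cdot S_d = \mathcal{C}(I,r),
\]
with the last equality being the definition \eqref{eq:C}.

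Next, I would argue that the output of the algorithm generates $\overline{I^r}$. By \Cref{thm:boundeddistance}, every lattice point $\ba \in \mathcal{C}(I,r)\cap \N^d$ satisfies $\bx^\ba \in \overline{I^r}$, so every monomial appended to the \textbf{generators} list belongs to $\overline{I^r}$. Conversely, the same theorem asserts that every minimal generator $\bx^\ba$ of $\overline{I^r}$ has exponent vector $\ba \in \mathcal{C}(I,r)\cap \N^d$, hence is enumerated by the \textbf{latticePoints} routine on line~\ref{minklatpts}. Therefore the returned list contains $G(\overline{I^r})$ and is itself contained in $\overline{I^r}$, which forces it to be a monomial generating set.

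Finally, I would note that minimality is not claimed: lattice points in $\mathcal{C}(I,r)$ that are not in $G(\overline{I^r})$ will correspond to monomials divisible by some minimal generator, producing redundant elements in the output. I do not anticipate a genuine obstacle in this proof, since the content has been packaged into \Cref{thm:boundeddistance}; the only item requiring care is the identification of the constructed simplex with $(d-\tfrac{1}{q})\cdot S_d$, and the tacit assumption that the input $r$ is rational (matching the algorithm's input specification), despite the proposition's statement mentioning $r \in \R_+$.
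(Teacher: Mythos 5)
Your proposal is correct and follows exactly the paper's route: both identify the set $C$ constructed in \Cref{alg:Minkowski} with $\mathcal{C}(I,r)$ from \eqref{eq:C} and then invoke the final assertion of \Cref{thm:boundeddistance}. Your extra care in checking that the constructed simplex is $(d-\tfrac{1}{q})\cdot S_d$, and your remark about the mismatch between the hypothesis $r\in\R_+$ and the rational input $r=\tfrac{p}{q}$ assumed by the algorithm and by \Cref{thm:boundeddistance}, are both worthwhile observations that the paper leaves implicit.
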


\begin{proof}
This follows from the assertion $\ov{I^r}=(\{\bx^\ba\mid \ba\in \mathcal{C}(I,r)\})\cap \N^d)$ of  \Cref{thm:boundeddistance}. In  \Cref{alg:Minkowski} the set $\mathcal{C}(I,r)$, termed $C$, is constructed according to  equation \eqref{eq:C}.
\end{proof}

\begin{ex}
\label{ex:Minkowski}

Consider the ideal $I=(xy^5, x^2y^2, x^4y)$ and the rational number $r=\frac{4}{3}$. Then one can determine that 
\[\ov{I^{4/3}}=(x^2y^5, x^2y^6, x^2y^7, x^3y^3, x^3y^4, x^3y^5, x^3y^6, x^4y^2, x^4y^3, \newline x^4y^4, x^4y^5, x^5y^2, x^5y^3, x^6y^2)\] based on identifying the lattice points in the convex region \[\mathcal{C}\left(I,\frac{4}{3}\right)=\frac{4}{3}\cdot \np(I)+\frac{5}{3}\cdot S_2\] given by \Cref{thm:boundeddistance}. Note that $\ov{I^{4/3}}$ is minimally generated by $G(\ov{I^{4/3}})=\{x^2y^5, x^3y^3, x^4y^2\}$. Thus, \Cref{alg:Minkowski} does not in general identify the minimal generators, but rather a possibly non minimal set of generators for $\ov{I^r}$. In the \Cref{fig:mink-alg-2}, the region $\mathcal{C}(I, \frac{4}{3})$ is shaded in darker blue, while the rest of the scaled polyhedron $\frac{4}{3}\cdot NP(I)$  is shaded in lighter blue.

\begin{figure}[h!]
    \centering \scalebox{0.75}{
    \begin{tikzpicture}
        
        \tikzstyle{every node}=[font=\small]
        \pgfplotsset{every axis legend/.append style={legend pos=outer north east,draw=NordWhite}}
        
        \begin{axis}  
        [posQuad, xtick={0,...,8}, ytick={0,...,9}, xmin=0, xmax=8, ymin=0, ymax=9]
        
        \addplot[fill=white!30!NordBrightBlue, opacity=0.5, line width=0pt] coordinates{(4/3,20/3) (8/3,8/3) (16/3,4/3) (16/3+5/3, 4/3) (4/3 + 5/3,20/3) (4/3,20/3+5/3)};
        \end{axis}
        
        \begin{axis}
        [posQuad, xtick={0,...,8}, ytick={0,...,9}, xmin=0, xmax=8, ymin=0, ymax=9]
        
        \addplot[area style, fill=NordBrightBlue, opacity=0.5, line width = 2pt, draw = NordBrightBlue] coordinates{(4/3,9) (4/3,20/3) (8/3,8/3) (16/3,4/3) (8,4/3) (8,9)};
        \addlegendentry{$r \cdot NP(I)$}
        
        \addlegendimage{area style, fill=white!30!NordBrightBlue, line width=0pt, draw = NordBrightBlue}
        \addlegendentry{Convex Region}
    
        \addplot[only marks, mark=*, fill=NordMagenta, draw=NordMagenta]coordinates{(2,5) (3,3) (4,2)};
        \addlegendentry{Minimal Generators}
        
        \addplot[only marks,mark=*, fill=NordYellow, draw=NordYellow] coordinates{(2,6) (2,7) (3,4) (3,5) (3,6)  (4,3) (4,4) (4,5)  (5,2) (5,3) (5,4) (6,2) };
        \addlegendentry{Interior Points}
        
        \end{axis}

    \end{tikzpicture}}
    \caption{Computing $\ov{(xy^5, x^2y^2, x^4y)^{4/3}}$ using the Minkowski algorithm.}
    \label{fig:mink-alg-2}
\end{figure}
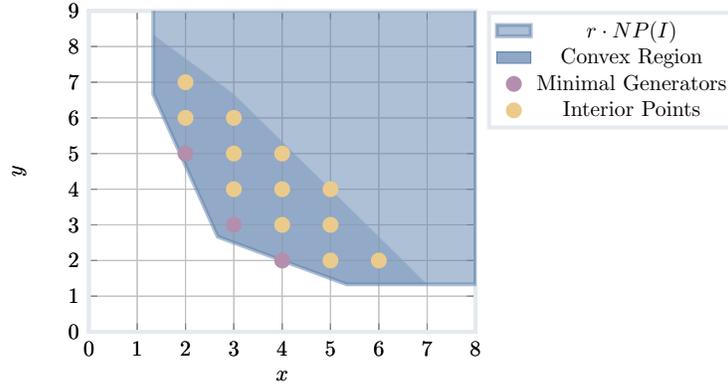

\end{ex}

\subsection{Hyperrectangle Algorithm}
The next algorithms depend on the notion of the hyperrectangle of a scaled Newton polyhedron, which is defined below.

\begin{defn} 
\label{defn:hyperrect}
Given a monomial ideal $I$ of a $d$-dimensional polynomial ring and $r\in\R_+$, define the set of scaled vertices of $I$ with respect to $r$ to be 
\[
\mathcal{V}(I,r) = \left\{\lceil r\ba \rceil := \left( \lceil ra_1 \rceil, \dots, \lceil ra_d \rceil \right) \, | \, x^\ba \in G(I) \right\}.
\] 
Let $\alpha = (\alpha_1, \dots , \alpha_d) \in   \mathcal{V}(I,r)$. Define 
\begin{equation}
\label{eq:minmax}
    \min(  \mathcal{V}(I,r), i) = \min\limits_{\alpha \in   \mathcal{V}(I,r)} \alpha_i \quad \text{ and } \quad \max(  \mathcal{V}(I,r), i) = \max\limits_{\alpha \in   \mathcal{V}(I,r)} \alpha_i.
\end{equation}
Finally, set the \emph{hyperrectangle of $r\cdot NP(I)$} to be the following set 
\begin{align*}
 \hype(I,r) &= \{ \bc = (c_1, \dots, c_d) \, | \, c_i \in [\min(  \mathcal{V}(I,r), i), \, \max(  \mathcal{V}(I,r), i)]  \}\\
 &=\prod_{i=1}^d  [\min(  \mathcal{V}(I,r), i), \, \max(  \mathcal{V}(I,r), i)].
\end{align*}
\end{defn}

We now see that the generators for the $r$-th real power of $I$ are among the set of lattice points in $\hype(I,r)$.

\begin{lem}
\label{lem:hype}
Let $I$ be a monomial ideal and let $r\in\R_+$. Denote the set of lattice points in $\hype(I,r)$  by $\mathcal{S}(I,r)$. Then 
\begin{enumerate}
\item $\lceil r\cdot \np(I) \rceil := \{(\lceil p_1 \rceil, \ldots, \lceil p_d\rceil) \mid \bp \in  r\cdot \np(I) \} \subseteq  \mathcal{S}(I,r)$
\item
$\ov{I^r}$ is generated by a subset of the lattice points in $\hype(I,r)$, more precisely
\[
\ov{I^r}=(\{\bx^\ba \mid \ba \in r\cdot NP(I)\cap \hype(I,r) \cap \N^d\}).
\]
\end{enumerate}
\end{lem}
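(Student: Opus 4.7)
The plan is to prove the two parts in order, using part (1) as a key ingredient for part (2). Both parts rely on the same basic fact: the ceiling function $\lceil \cdot \rceil$ is monotone and commutes with $\min$ and $\max$ of finite sets (since $\lceil \min(a,b) \rceil = \min(\lceil a\rceil, \lceil b\rceil)$ and similarly for $\max$).

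For part (1), I would take an arbitrary point $\bp \in r\cdot \np(I)$ and expand it as a convex combination of the scaled generator exponents: writing $\bp = r\sum_i \lambda_i \bv_i$ with $\bv_i$ the exponents of elements of $G(I)$, $\lambda_i \geq 0$ and $\sum_i \lambda_i = 1$. Reading off coordinate $j$ gives $p_j = \sum_i \lambda_i (r v_{i,j})$, and the convex-combination bound yields
\[
\min_i (r v_{i,j}) \;\leq\; p_j \;\leq\; \max_i (r v_{i,j}).
\]
Applying the monotone ceiling on both sides and using that ceiling commutes with $\min$ and $\max$ of finite sets, I obtain
\[
\min(\mathcal{V}(I,r),j) \;=\; \min_i \lceil r v_{i,j}\rceil \;\leq\; \lceil p_j \rceil \;\leq\; \max_i \lceil r v_{i,j}\rceil \;=\; \max(\mathcal{V}(I,r),j),
\]
so $\lceil \bp \rceil$ lies in $\hype(I,r)$, proving the inclusion.

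For part (2), the inclusion $\supseteq$ is immediate from \Cref{def:realpower} since $\hype(I,r) \cap r\cdot NP(I) \cap \N^d \subseteq r\cdot NP(I) \cap \N^d$. For $\subseteq$, it suffices to show that every minimal monomial generator $\bx^\ba$ of $\ov{I^r}$ has $\ba \in \hype(I,r)$. Given such $\ba \in r\cdot NP(I)\cap\N^d$, I would invoke the Minkowski decomposition $r\cdot NP(I) = r\cdot\np(I)+\R^d_+$ (obtained by scaling \eqref{eq:NP=np+orthant}) to produce $\bb \in r\cdot \np(I)$ with $\bb \leq \ba$ coordinatewise. Since $\ba$ is integral, $\ba \geq \lceil \bb \rceil$, and by part (1), $\lceil \bb \rceil \in \hype(I,r) \cap \N^d$. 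Furthermore $\lceil \bb \rceil \in r\cdot NP(I)$ because $r\cdot NP(I)$ is closed under coordinatewise increase (again by \eqref{eq:NP=np+orthant}), so $\bx^{\lceil \bb \rceil} \in \ov{I^r}$. Minimality of $\bx^\ba$ forces $\ba = \lceil \bb \rceil$, placing $\ba$ in $\hype(I,r)$ as required.

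I do not expect a serious obstacle here; the main subtlety is being careful that the ceiling of a convex combination of scaled vertex coordinates is sandwiched by $\min$ and $\max$ of the ceilings (not merely bounded by some other nearby quantity), which is exactly what makes $\hype(I,r)$ the right hyperrectangle. Everything else reduces to the standard Newton-polyhedron decomposition and the minimality of generators of an integrally closed monomial ideal.
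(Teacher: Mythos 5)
Your proof is correct and follows essentially the same approach as the paper: part (1) by writing a point of $r\cdot\np(I)$ as a convex combination of scaled generator exponents and using monotonicity of the ceiling, and part (2) via the Minkowski decomposition $r\cdot NP(I)=r\cdot\np(I)+\R^d_+$ together with rounding up to $\lceil\bb\rceil$. The only cosmetic difference is that you restrict attention to minimal generators and invoke minimality to force $\ba=\lceil\bb\rceil$, whereas the paper argues directly that any monomial of $\ov{I^r}$ is a multiple of $\bx^{\lceil\bb\rceil}$, which already lies in the candidate ideal; both are sound.
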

\begin{proof}
(1)  Every point in $\bp\in r\cdot \np(I)$ is a convex combination of the vertices of this polytope, which are in the set $V=\{r\ba \mid \bx^\ba \in G(I)\}$. Since every coordinate $p_i$ of $\bp$ is a convex combination of $i$-th coordinates of elements in $V$ we obtain that $p_i\in [\min_{\ba\in V}{a_i}, \max_{\ba\in V}{a_i}]$ for $1\leq i\leq d$.  Thus $\lceil p_i \rceil \in [\min(  \mathcal{V}, i), \, \max(  \mathcal{V}, i)]$, which settles the claim.

(2) Temporarily denote $J:=(\bx^\ba \mid \ba \in r\cdot NP(I)\cap \mathcal{S}(I,r))$. Then $J\subseteq \ov{I^r}$ follows from \Cref{def:realpower}. Now let $\ba\in \N^d$ be such that $\bx^\ba\in \ov{I^r}$ and thus $\ba\in r\cdot NP(I)\cap \N^d$. From \eqref{eq:NP=np+orthant} we know
\[
 r\cdot NP(I) = r\cdot \np(I) +r\cdot \R_+^d= r\cdot \np(I) +\R_+^d,
\]
thus there exists $\bb\in r\cdot \np(I)$ such that $\ba\geq \bb$. Since $\ba\in \N^d$ it follows that $\ba\geq \lceil \bb\rceil=(\lceil b_1\rceil, \ldots, \lceil b_d\rceil]$, where $\lceil \bb\rceil \in \lceil r\cdot \np(I)\rceil$. From part (1) it follows that  $\lceil \bb\rceil \in \mathcal{S}(I,r)$ and from $\lceil \bb\rceil \geq \bb $ we deduce $\lceil \bb\rceil \in r\cdot \np(I)$ hence $\lceil \bb\rceil \in  r\cdot NP(I)+\R_+^d$. We have thus shown that  $\lceil \bb\rceil \in r\cdot NP(I)\cap \mathcal{S}(I,r)$, hence $\bx^{\lceil \bb\rceil} \in J$ and since $\ba\geq \lceil \bb\rceil $ we deduce $\bx^\ba\in J$. Thus the containment $\ov{I^r} \subseteq J$ has been established.
 \end{proof}

Based on the previous result we produce the following algorithm. 

\smallskip

\begin{algorithm}[H]
\label{alg:hyperrect}
\DontPrintSemicolon 
    \SetKw{KwTo}{in}
    \KwInput{generators $G(I)$ and the Newton polyhedron $NP(I)$  of an ideal $I$, a real number $r\in\R_+$}
    \KwOutput{a list of monomial generators for the ideal $\overline{I^r}$}
    
    $d := $ dimension of the polynomial ring containing $I$
    
    candidates := $\hype(I,r)\cap \N^d$
    
    Initialize generators := $\emptyset$
    
    \For{$\bb $ \KwTo   candidates}
    {
        \If{$\bb $ \KwTo $r \cdot NP(I)$}{
        generators := append($\bx^\bb$, generators)
        }
    }
    
  Return generators.

\caption{Hyperrectangle algorithm}
\end{algorithm}

\begin{prop}
\label{prop:hyperrectalg}
If $I$ is a monomial ideal and $r\in\R_+$, then \Cref{alg:hyperrect} returns a not necessarily minimal set of monomial generators for $\overline{I^r}$.
\end{prop}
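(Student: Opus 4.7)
The plan is to observe that the algorithm is an essentially verbatim implementation of the set equality in \Cref{lem:hype}(2), namely
\[
\ov{I^r}=(\{\bx^\ba \mid \ba \in r\cdot NP(I)\cap \hype(I,r) \cap \N^d\}).
\]
So the proof reduces to carefully matching each line of \Cref{alg:hyperrect} with this description.

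First, I would observe that the variable \emph{candidates} is set to $\hype(I,r)\cap\N^d$, and then the \textbf{for} loop filters this set by keeping only those $\bb$ which additionally lie in $r\cdot NP(I)$. Hence the exponent vectors of the monomials in the output list are exactly the elements of $\hype(I,r)\cap r\cdot NP(I)\cap \N^d$. By \Cref{lem:hype}(2), these monomials generate $\ov{I^r}$.

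The only small point requiring comment is that the set \emph{candidates} is finite, so the \textbf{for} loop terminates. This is because $\hype(I,r)$ is a bounded hyperrectangle (a product of $d$ bounded closed intervals by \Cref{defn:hyperrect}), and bounded subsets of $\R^d$ contain only finitely many lattice points. The containment test $\bb\in r\cdot NP(I)$ is delegated to an auxiliary routine (as in the standing assumption of \Cref{s:alg}), and does not affect correctness beyond reducing the generating set to the one prescribed by \Cref{lem:hype}(2).

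Finally, I would remark (parallel to the remark for \Cref{alg:Minkowski}) that the returned list may be non minimal: \Cref{lem:hype}(2) yields a generating set, not a minimal one, since lattice points which are strictly above other selected points in the coordinatewise order are included as well. The main obstacle, if any, is purely expository: emphasizing that the algorithm genuinely enumerates $\hype(I,r)\cap r\cdot NP(I)\cap \N^d$ rather than some larger or smaller set.
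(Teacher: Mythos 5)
Your proof is correct and takes the same approach as the paper: the paper's proof is simply the one-line observation that the proposition follows from \Cref{lem:hype}(2). Your additional remarks on termination (finiteness of lattice points in the bounded hyperrectangle) and on why the output may be non-minimal are accurate elaborations the paper leaves implicit.
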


\begin{proof}
This follows from part (2) of \Cref{lem:hype}.
\end{proof}

\begin{ex}

\Cref{fig:hyperrect-alg-2} illustrates the set of lattice points in the hyperrectangle $\hype(I, \frac{4}{3})$ for the ideal $I=(xy^5, x^2y^2, x^4y)$. These are marked in solid yellow, solid purple and hollow black. The set of generators returned by \Cref{alg:hyperrect} corresponds to the yellow and purple lattice points, while the minimal generators correspond to the purple points.

    \begin{figure}[h!]
    \centering \scalebox{0.75}{
    \begin{tikzpicture}
        \tikzstyle{every node}=[font=\small]
        \pgfplotsset{every axis legend/.append style={legend pos=outer north east,draw=NordWhite}}
        
        \begin{axis}
        [posQuad, xtick={0,...,8}, ytick={0,...,9}, 
        xmin=0, xmax=8, ymin=0, ymax=9]
        
        \addplot[area style, fill=NordBrightBlue, opacity=0.5, line width = 2pt, draw = NordBrightBlue] coordinates{(4/3,9) (4/3,20/3) (8/3,8/3) (16/3,4/3) (8,4/3) (8,9)};
        \closedcycle;
        
        \addplot[NordOrange] coordinates{(2,2) (2,7) (6,7) (6,2) (2,2)};
        
        \addplot[only marks, mark=*, fill=NordMagenta, draw=NordMagenta]coordinates{(2,5) (3,3) (4,2)};
        
        \addplot[only marks,mark=*, fill=NordYellow, draw=NordYellow] coordinates{(2,6) (2,7) (3,4) (3,5) (3,6) (3,7) (4,3) (4,4) (4,5) (4,6) (4,7) (5,2) (5,3) (5,4) (5,5) (5,6) (5,7) (6,2) (6,3) (6,4) (6,5) (6,6) (6,7)};
        
        \addplot[only marks,mark=*, fill=NordWhite, draw=NordBlack] coordinates{(2,2) (2,3) (2,4) (3,2)};
        \end{axis}

    \end{tikzpicture}

\begin{tikzpicture}
    \tikzstyle{every node}=[font=\small]
    \pgfplotsset{every axis legend/.append style={legend pos=outer north east,draw=NordWhite}}
    
    \begin{axis}
    [posQuad, xtick={0,...,8}, ytick={0,...,9}, 
    xmin=0, xmax=8, ymin=0, ymax=9]
    
    \addplot[area style, fill=NordBrightBlue, opacity=0.5, line width = 2pt, draw = NordBrightBlue] coordinates{(4/3,9) (4/3,20/3) (8/3,8/3) (16/3,4/3) (8,4/3) (8,9)};
    \closedcycle;
    \addlegendentry{$r\cdot NP(I)$}
    
    \addplot[NordOrange] coordinates{(2,2) (2,7) (6,7) (6,2) (2,2)};
    \addlegendentry{Boundary of $\hype(I,r)$}
    
    \addplot[only marks, mark=*, fill=NordMagenta, draw=NordMagenta]coordinates{(2,5) (3,3) (4,2)};
    \addlegendentry{Minimal Generators}
    
    \addplot[only marks,mark=*, fill=NordYellow, draw=NordYellow] coordinates{(5,2) (6,2)};
    \addlegendentry{Interior Points}
    
    \addplot[only marks,mark=*, fill=NordWhite, draw=NordBlack] coordinates{(2,2) (2,3) (2,4) (3,2)};
    \addlegendentry{Exterior Points}
    \end{axis}
\end{tikzpicture}}

    \caption{Computing $\ov{(xy^5, x^2y^2, x^4y)^{4/3}}$ using the Hyperrectangle algorithm (left) and Improved Hyperrectangle algorithm (right)}
    \label{fig:hyperrect-alg-2}
\end{figure}
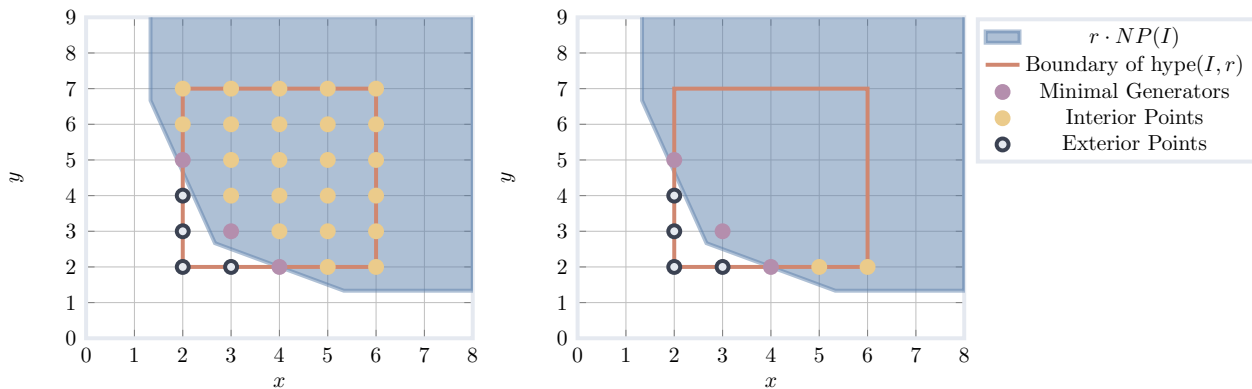
\end{ex}

In general, for fixed $I$ and $r$, the two convex sets $\mathcal{C}(I,r))$ and $\hype(I,r)$ where \Cref{alg:Minkowski} and \Cref{alg:hyperrect}, respectively, look for a set of generators for $\ov{I^r}$ are incomparable. For an illustration consider \Cref{fig:mink-alg-2} in \Cref{ex:Minkowski}, where the set $\mathcal{C}(I,r))$ is shaded in darker blue and \Cref{fig:hyperrect-alg-2} where the set $\hype(I,r)$ is the marked by the orange boundary. Note that there are no containments between the sets $\mathcal{C}(I,r)$ and $\hype(I,r)$ in this example. In general one does not expect a containment between the corresponding sets of lattice points $\mathcal{C}(I,r))\cap \N^d$ and $\hype(I,r)\cap \N^d$ either. However, the cardinality of the former set is typically smaller than the latter. We address this shortcoming in the next \Cref{alg:improvedhyperrect}. 

The exponent vectors for minimal generators of $\ov{I^r}$ are in $\mathcal{C}(I,r)\cap \hype(I,r)\cap \N^d$. However, as illustrated by \Cref{fig:mink-alg-2} and \Cref{fig:hyperrect-alg-2}, the exponents for the minimal generators of $\ov{I^r}$ can form a proper subset of $\mathcal{C}(I,r))\cap \hype(I,r)\cap\N^d$.

The next variant improves on the hyperrectangle algorithm by reducing some redundancies in the traversal of lattice points. Using the while-loop on the final coordinate, the improved hyperrectangle algorithm stops looking for other generators after it finds a lattice point that is inside $r \cdot NP(I)$. Note that the improved hyperrectangle algorithm optimizes traversal of the set $\hype(I,r)\cap \N^d$ only on the last coordinate, so the benefits of using this algorithm over the hyperrectangle algorithm is more apparent in low dimensional rings.

\smallskip

\begin{algorithm}[H]
\label{alg:improvedhyperrect}
\DontPrintSemicolon 
    \SetKw{KwTo}{in}
    \KwInput{the Newton polyhedron $NP(I)$ of an ideal $I$, a real number $r\in \R_+$}
    \KwOutput{a list of monomial generators for the ideal $\overline{I^r}$}
    $d := $ dimension of the polynomial ring containing $I$
    
    startPoints := $\{ \bb \,\in \hype(I,r) \; | \; b_d = \min(  \mathcal{V}, d)\}$
    
    Initialize generators := $\emptyset$
    
    \For{$\bb$ \KwTo startPoints}
    {
        \While{$\bb \ {\bf not}$ \KwTo $ r \cdot NP(I) \ {\bf and }\ b_d \leq \max(  \mathcal{V}, d)$}{
        $\bb := \bb + (0, \dots, 0, 1)$ \qquad \tcc{ ``move up''}
        }
        
        \If{$\bb$ \KwTo $r \cdot NP(I)$}{
        generators := append($\bx^\bb$, generators)
        }
    }
    \tcc{Return the possibly non minimal monomial generators}
    
    Return  generators.
\caption{Improved Hyperrectangle algorithm}
\end{algorithm}


\begin{ex}

\Cref{fig:hyperrect-alg-2} illustrates the set of generators for the ideal $\ov{(xy^5, x^2y^2, x^4y)^{4/3}}$ returned by the improved hyperrectangle algorithm. The set of lattice points con\-si\-dered by this algorithm are marked in solid yellow and purple and hollow black. The set of generators returned by \Cref{alg:improvedhyperrect} corresponds to the yellow and purple lattice points, while the minimal generator correspond to the purple lattice points only. Compared to \Cref{fig:hyperrect-alg-2}, fewer non minimal generators are returned.
\end{ex}

%
%
%
%
%
%
%

\subsection{Staircase Algorithm}

 The algorithms presented in the previous sections (\Cref{alg:Minkowski}, \Cref{alg:hyperrect}, and \Cref{alg:improvedhyperrect}) have one common disadvantage in that they return possibly {\em non minimal} sets of generators for the real powers of monomial ideals. The next algorithm, termed the staircase algorithm,  traverses lattice points near the boundary of the Newton polyhedron. The traversal is designed so that, in the 2-dimensional case,  the minimal generators are  found. 
 
A benefit of the following algorithm is to improve upon the runtime of \Cref{alg:Minkowski} and \Cref{alg:improvedhyperrect}.  \Cref{alg:Minkowski} is slow in practice because of lattice points identification in step \ref{minklatpts}, while \Cref{alg:improvedhyperrect} may be inefficient because  a large number of operations could be performed to check if lattice points are in or outside $r \cdot NP(I)$. 
To alleviate this issue, the staircase algorithm optimizes the traversal of lattice points on the final two coordinates. The algorithm uses the notation in equation \eqref{eq:minmax}.

\smallskip

\begin{algorithm}[H]
\label{alg:staircasealg}
\DontPrintSemicolon 
    \SetKw{KwTo}{in}
    \KwInput{the Newton polyhedron $NP(I)$ of an ideal $I$, a real number $r\in \R_+$}
    \KwOutput{a list of monomial generators for the real power $\overline{I^r}$}
    Initialize generators := $\emptyset$
    
    $d := $ dimension of the polynomial ring containing $I$
     
      \If{$d=1$}
      {Return $\{\bx^{\min(\mathcal{V},1)} \}$}
     \Else{
    startPoints := $\big\{ \ba \in \hype(I,r) \; | \;  a_{d-1} = \min(\mathcal{V},d-1), \; a_{d} = \max(\mathcal{V},d) \big\}$ \label{startpt}
    
    \For{$\ba$ \KwTo startPoints}{
        $\bb := \ba$
        
        \While{$\ba$ \KwTo $\hype(I,r)$}{  \label{whileinhype}
            \If{$\ba$ \KwTo $r \cdot NP(I)$}
            {
                $\bb := \ba$
                
                $\ba := \ba - (0, \dots, 0, 1)$ \qquad \tcc{``move down'' \label{movedown} }                
            }
            \Else{
            	\If{$\bb$ \KwTo $r \cdot NP(I)$  \label{NPcond1} }
                {
                    generators := append($\bx^\bb$, generators) \label{add1}
                }
                $\bb := \ba$
                
                $\ba := \ba + (0, \dots, 1,0)$  \qquad \tcc{``move right''}
            }
        }
        
        \If{$\bb $ \KwTo $r \cdot NP(I)$ \label{NPcond2} }
        {
            generators := append($\bx^\bb$, generators) \label{add2}
        } 
    }
    }
    
 Return generators.
\caption{Staircase algorithm}
\end{algorithm}

\begin{ex}
\Cref{fig:stair-alg-2} shows the set of lattice points considered by the staircase algorithm within $\hype(I, \frac{4}{3})$ for the ideal $I=(xy^5, x^2y^2, x^4y)$. While all the lattice points along the path of the algorithm are considered, only the minimal generators corresponding to the purple lattice points are returned.

\begin{figure}[h!]
    \centering \scalebox{0.75}{
    \begin{tikzpicture}
        \tikzstyle{every node}=[font=\small]
        \pgfplotsset{every axis legend/.append style={legend pos=outer north east,draw=NordWhite}}
        
        \begin{axis}
        [posQuad, xtick={0,...,8}, ytick={0,...,9}, 
        xmin=0, xmax=8, ymin=0, ymax=9]
        
        \addplot[area style, fill=NordBrightBlue, opacity=0.5, line width = 2pt, draw = NordBrightBlue] coordinates{(4/3,9) (4/3,20/3) (8/3,8/3) (16/3,4/3) (8,4/3) (8,9)};
        \closedcycle;
        \addlegendentry{$r\cdot NP(I)$}
        
        \addplot[NordOrange] coordinates{(2,2) (2,7) (6,7) (6,2) (2,2)};
        \addlegendentry{Boundary of $\hype(I,r)$}
        
        \addplot[NordBrightBlack]coordinates{(2,7) (2,6) (2,5) (2,4) (3,4) (3,3) (3,2) (4,2)};
        \addlegendentry{Path of algorithm}
    
        \addplot[only marks, mark=*, fill=NordMagenta, draw=NordMagenta]coordinates{(2,5) (3,3) (4,2)};
        \addlegendentry{Minimal Generators}
        
        \addplot[only marks,mark=*, fill=NordYellow, draw=NordYellow] coordinates{(2,7) (2,6) (3,4)};
        \addlegendentry{Interior Points}
        
        \addplot[only marks,mark=*, fill=NordWhite, draw=NordBlack] coordinates{(2,4) (3,2)};
        
        \node[anchor=south, fill = NordWhite] at (2,7) {start};
        \node[anchor=north west, fill = NordWhite] at (4,2) {end};
        \addlegendentry{Exterior Points}
        \end{axis}
    \end{tikzpicture}}
    
    \caption{Computing $\ov{(xy^5, x^2y^2, x^4y)^{4/3}}$ using the Staircase algorithm}
    \label{fig:stair-alg-2}
\end{figure}
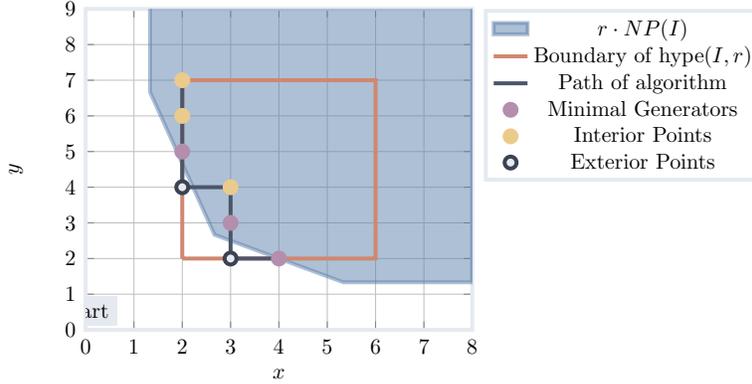
\end{ex}

We are now ready to show the validity of \Cref{alg:staircasealg}. We utilize terminology that is consistent with the visual descriptions in \Cref{fig:stair-alg-2}. We call the {\em path} of the algorithm $\mathcal{P}(I,r)$ the set of values taken by the variable $\ba$ in \Cref{alg:staircasealg} for fixed inputs $I,r$. This set is the disjoint union of two subsets: the exterior path and the interior path defined below:
\begin{align*}
\mathcal{P}_{ext}(I,r) &=\{\ba \in \mathcal{P}(I,r) \setminus r\cdot NP(I)\}\\
\mathcal{P}_{int}(I,r) &=\{\ba \in \mathcal{P}(I,r) \cap r\cdot NP(I)\}.
 \end{align*}
 \begin{prop}
\label{thm:staircasealg}
If $I$ is a monomial ideal of a $d$-dimensional polynomial ring and 
 $d\in\{1,2\}$, then \Cref{alg:staircasealg} returns a minimal set of monomial generators for $\overline{I^r}$.
If $d\geq 3$ then \Cref{alg:staircasealg} returns a not necessarily minimal set of monomial generators for $\overline{I^r}$. 
\end{prop}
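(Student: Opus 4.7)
The plan is to split into cases $d=1$, $d=2$, and $d\geq 3$. In each case the assertion that every appended $\bx^\bb$ lies in $\overline{I^r}$ is immediate from the explicit membership checks $\bb\in r\cdot NP(I)$ in the algorithm combined with \Cref{def:realpower}. The case $d=1$ is immediate because $\min(\mathcal V,1) = \lceil r v^* \rceil$ with $v^* = \min\{a \mid x^a \in G(I)\}$ is exactly the exponent of the unique minimal generator of $\overline{I^r}$.

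For $d=2$, I first verify that the unique starting point $\alpha_0 = (\min(\mathcal V,1),\max(\mathcal V,2))$ lies in $r\cdot NP(I)$: taking $\ba^*\in G(I)$ with $\min(\mathcal V,1) = \lceil ra^*_1 \rceil$, the lattice point $\lceil r\ba^*\rceil\in r\cdot NP(I)$ has second coordinate at most $\max(\mathcal V,2)$, so by upward closure $\alpha_0 \in r\cdot NP(I)$. From $\alpha_0$ the alternating move-down/move-right pattern visits the ``column bottoms'' of $r\cdot NP(I)\cap \hype(I,r)$, meaning lattice points $\bb$ with $\bb\in r\cdot NP(I)$ and $\bb-\be_2\notin r\cdot NP(I)$. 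The crux is to show every such $\bb$ is a minimal generator of $\overline{I^r}$, which amounts to showing $\bb-\be_1\notin r\cdot NP(I)$: since minimal generators of a monomial ideal are pairwise incomparable, those of $\overline{I^r}$ form a strictly decreasing staircase when ordered by increasing first coordinate; hence the bottom of column $b_1-1$ strictly exceeds $b_2$, placing $\bb-\be_1$ strictly below that bottom (the edge case $b_1 = \min(\mathcal V,1)$ is handled separately using the fact that no lattice point of $r\cdot NP(I)$ has first coordinate below $\min(\mathcal V,1)$, a consequence of \Cref{lem:hype}). Conversely, each minimal generator lies in $\hype(I,r)$ by \Cref{lem:hype}, is a column bottom, and is encountered by the staircase traversal, so the output equals the minimal generating set.

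For $d\geq 3$ it suffices to show that every minimal generator $\ba$ of $\overline{I^r}$ is appended. The point $\alpha = (a_1,\ldots,a_{d-2},\min(\mathcal V,d-1),\max(\mathcal V,d))$ lies in the set startPoints because $\ba\in\hype(I,r)$ by \Cref{lem:hype} forces $a_i\in[\min(\mathcal V,i),\max(\mathcal V,i)]$ for all $i\leq d-2$. The inner loop starting from $\alpha$ is a 2D traversal in coordinates $(x_{d-1},x_d)$ of the slice $S_\ba = \{(y,z) \mid (a_1,\ldots,a_{d-2},y,z)\in r\cdot NP(I)\}$, which is itself 2D upward-closed and convex, so the $d=2$ analysis applies verbatim to $S_\ba$; since $\ba-\be_{d-1},\ba-\be_d\notin r\cdot NP(I)$, the projection $(a_{d-1},a_d)$ is a corner of $S_\ba$ and the algorithm appends $\ba$. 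The main obstacle, and the precise reason minimality cannot be extended, is that being a corner of a 2D slice only guarantees $\bb-\be_{d-1},\bb-\be_d\notin r\cdot NP(I)$, so the algorithm may also append points $\bb$ for which $\bb-\be_i\in r\cdot NP(I)$ for some $i\leq d-2$, yielding valid but non-minimal elements of $\overline{I^r}$.
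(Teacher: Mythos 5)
Your $d=1$ argument is correct and essentially matches the paper. The issue is with the $d=2$ case, and it propagates to $d\geq 3$.

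You assert that the traversal ``visits the column bottoms of $r\cdot NP(I)\cap\hype(I,r)$,'' defined as lattice points $\bb\in r\cdot NP(I)$ with $\bb-\be_2\notin r\cdot NP(I)$, and then claim every such column bottom is a minimal generator. Both claims are false, and in exactly the same place. Take $I=(x^3,y^2)$ and $r=1$, so $r\cdot NP(I)$ is cut out by $2p+3q\geq 6$. The hyperrectangle is $[0,3]\times[0,2]$. The lattice point $(1,2)$ lies in $r\cdot NP(I)\cap\hype(I,r)$ and $(1,1)\notin r\cdot NP(I)$, so $(1,2)$ is a column bottom by your definition; but $(1,2)-\be_1=(0,2)\in r\cdot NP(I)$, so $(1,2)$ is \emph{not} a minimal generator of $\overline{I}$. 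Moreover the algorithm's path is $(0,2)\to(0,1)\to(1,1)\to(2,1)\to(2,0)\to(3,0)$ --- it never visits $(1,2)$ at all, because once it moves down past row $2$ it never returns to that row. Your attempt to repair the column-bottom claim via ``minimal generators form a strictly decreasing staircase, hence the bottom of column $b_1-1$ strictly exceeds $b_2$'' is circular: it uses incomparability of minimal generators to deduce that column bottoms \emph{are} minimal generators, but these are a priori different sets (and the example shows they genuinely are different). The correct statement, which the paper proves, is that the appended points are exactly the bottoms of the \emph{vertical strips of the interior path} $\mathcal{P}_{int}(I,r)$, and consecutive strips satisfy $b_i=c_{i+1}+1$; this stacking forces strict decrease of strip bottoms and hence pairwise incomparability, which is the missing ingredient. (Note the paper's column bottoms and strip bottoms only coincide for columns the path actually descends into.)

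Because your $d\geq 3$ argument defers to ``the $d=2$ analysis applies verbatim to the slice $S_\ba$,'' it inherits the same gap: knowing that $(a_{d-1},a_d)$ is a ``corner'' of the slice (i.e.\ that $\ba-\be_{d-1}$ and $\ba-\be_d$ are both outside) says it is a minimal generator of the 2D slice ideal, but one still needs the $d=2$ result that \emph{all} minimal generators of a 2D ideal are strip bottoms of the path and are therefore appended --- which your $d=2$ proof did not actually establish. Your remark explaining why minimality can fail for $d\geq 3$ (the algorithm cannot detect divisibility across the first $d-2$ coordinates) is correct and is the same observation the paper makes implicitly via its decomposition $\overline{I^r}=\left(\sum_\gamma x_1^{\gamma_1}\cdots x_{d-2}^{\gamma_{d-2}}\,I_{\gamma,r}\right)$.
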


\begin{proof}
In the case $d=1$, every monomial ideal $J\subseteq K[x_1]$ is principal, minimally generated by $x_1^m$, where $m=\min\{a\mid x_1^a\in J\}$. Applying this to $J=\ov{I^r}$ for which case $m=\min(\mathcal{V},1)$ yields $G(\ov{I^r})=\{x_1^{\min(\mathcal{V},1)}\}$, i.e., the output of \Cref{alg:staircasealg} in step 4.

For the case $d=2$, first notice that because of the succession of down moves and right moves, the interior path $\mathcal{P}_{int}(I,r)$
is a disjoint union of vertical strips of the form \[s_{a,b,c}:=\{\gamma=(\gamma_1,\gamma_2) \mid \gamma_1=a, \, \gamma_2\in [b,c]\cap \N\},\]
where $b=\min\{b'\mid (a,b')\in r\cdot NP(I)\}$ by step \ref{movedown} of the algorithm; see \Cref{fig:stair-alg-2} for an illustration. Moreover, the interior path contains one lattice point for each value of the $x_2$-coordinate in $[\min(\mathcal{V},1),\max(\mathcal{V},1)]$ so that in the decomposition 
\begin{equation}
    \label{eq:strips}
\mathcal{P}_{int}(I,r)=\bigcup_{i=\min(\mathcal{V},1)}^e s_{i,b_i,c_i}
\end{equation}
we must have $c_{\min(\mathcal{V},1)}= \max(\mathcal{V},2)$ and  $b_i=c_{i+1}+1$ for each $i\leq e-1$, where $e$ is the maximum $x_1$ coordinate of any point on the interior path. In particular, if $i<j$ then the inequality $b_i>c_j$ holds.

Let $\bx^\ba\in G(\ov{I^r})$.  By \Cref{lem:hype} it follows that $\ba=(a_1,a_2) \in \hype(I,r)$, so $a_2\in [\min(\mathcal{V},1),\max(\mathcal{V},1)]$, and by the preceding remarks there exists a unique point $\bb\in \mathcal{P}_{int}(I,r)$ with $b_2=a_2$.  We claim that $\bb=\ba$. If not, then $a_1<b_1$ since $\bx^\ba$ is a minimal generator (i.e., $\ba$ lies ``left'' of $\bb$), and for this reason $a_2 = b_2\leq c_{b_1}< b_{a_1}$ (i.e $\ba$ lies ``below'' the strip with $x_1$-coordinate $a_1$). Since $\ba=(a_1,a_2)\in r\cdot NP(I)$ and $b_{a_1}=\min\{b'\mid (a_1,b')\in r\cdot NP(I)\}$, this yields a contradiction. We have shown that 
\[G(\ov{I^r})\subseteq \{\bx^\ba \mid \ba \in\mathcal{P}_{int}(I,r)\}.\]

In the notation of \eqref{eq:strips}, the algorithm returns the set $\{x_1^ix_2^{b_i}\mid \min(\mathcal{V},1)\leq i\leq e\}$.
Each of the  monomials $x_1^ix_2^j$ with $j\in(b_i,c_i]\cap \N$ are not in $G(\ov{I^r})$ since they are divisible by $x_1^ix_2^{b_i}$. Thus $G(\ov{I^r})$ is contained in the returned set. Moreover, the returned set consists of minimal generators since no two of its elements are comparable under the divisibility relation. In fact, this proof shows that the case $d=2$ of the algorithm gives a minimal set of generators for the ideal generated by the monomials with exponents in a given convex set (in our application to real powers, this convex set is $r\cdot NP(I)$). We use this to approach the higher dimensional cases. 

The case $d>2$ is derived from the case $d=2$ by the following analysis. By virtue of \Cref{lem:hype} we have the identity
\begin{align*}
\ov{I^r} &=\left( \{\bx^\ba \mid \ \ba\in \hype(I,r)\cap r\cdot NP(I)\cap \N^d\right)\\
&= \left( \sum_{\gamma\in \prod_{i=1}^{d-2}  [\min(  \mathcal{V}, i), \, \max(  \mathcal{V}, i)]} x_1^{\gamma_1}\cdots x_{d-2}^{\gamma_{d-2}} \cdot I_{\gamma,r}\right),
\end{align*}
where  $I_{\gamma,r}:=(\{x_{d-1}^ax_d^b\mid \ (\gamma_1, \ldots, \gamma_{d-2}, a,b)\in r\cdot NP(I) \})$ is an ideal in a 2-dimensional polynomial ring. 
According to the case $d=2$, steps 7--19 of the algorithm append the set $ x_1^{\gamma_1}\cdots x_{d-2}^{\gamma_{d-2}}\cdot G(I_{\gamma,r})$ to the generators list. The union of these sets generates $\ov{I^r}$ by the above displayed identity.
\end{proof}

\begin{ex}
We give a visual illustration of using \Cref{alg:staircasealg} to compute the integral closure of $I = (y^3, y^2z^5, x^2y^2, x^2z^3)$, that is, $\ov{I^1}$ in \Cref{fig:3dstaircase}. In 3-dimensional space, the path of the algorithm is a disjoint union of paths, each corresponding to an ideal in a 2-dimensional ring as shown in the proof of \Cref{thm:staircasealg}.

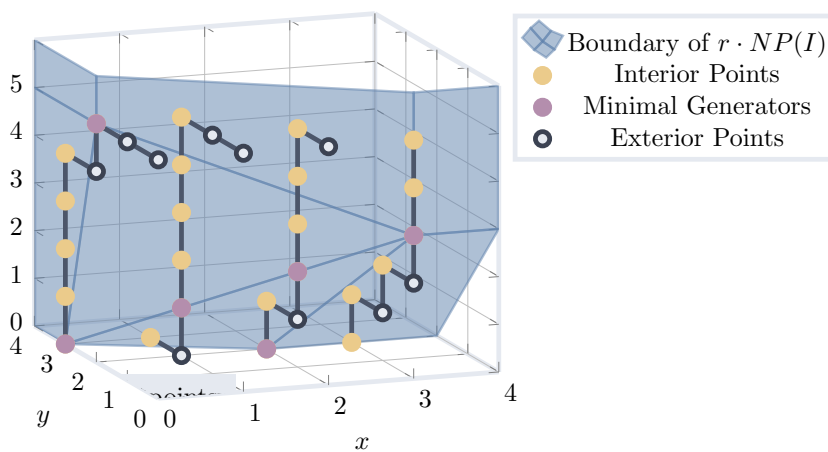
\begin{figure}[h!]
\centering \scalebox{0.9}{
\begin{tikzpicture}
    \tikzstyle{every node}=[font=\small]
    \pgfplotsset{every axis legend/.append style={legend pos=outer north east,draw=NordWhite}}
    
    \begin{axis}[view={-20}{15},posQuad, xtick={0,...,4}, ytick={0,...,4}, ztick={0,...,5},
    xmin=0, xmax=4, ymin=0, ymax=4, zmin=0, zmax=6]
    
    \addplot3[surf, faceted color = NordBrightBlue, color = NordBrightBlue, opacity = 0.5, line width = 1pt] coordinates {
     (2,2,0) (3,0,3) (0,2,5)
			 
	 (2,2,0) (0,3,0) (0,2,5)
	};
	\addlegendentry{Boundary of $r \cdot NP(I)$}
	
        
    \addplot3[only marks, mark=*, fill=NordYellow, draw=NordYellow] coordinates{(0,2,5) (0,3,4) (0,3,3) (0,3,2) (0,3,1) (0,3,0) (1,2,5) (1,2,4) (1,2,3) (1,2,2) (1,2,1) (1,3,0) (2,1,5) (2,1,4) (2,1,3) (2,1,2) (2,2,1) (2,2,0) (3,0,5) (3,0,4) (3,0,3) (3,1,2) (3,2,1) (3,2,0) };
    \addlegendentry{Interior Points}
    
    \addplot3[only marks, mark=*, fill=NordMagenta, draw=NordMagenta]coordinates{(2,2,0) (3,0,3) (0,2,5) (0,3,0) (1,2,1) (2,1,2)};
    \addlegendentry{Minimal Generators}

    \addplot3[only marks, mark=*, fill=NordWhite, draw=NordBlack] coordinates{(0,0,5) (0,1,5) (0,2,4) (1,0,5) (1,1,5) (1,2,0) (2,0,5) (2,1,1) (3,0,2) (3,1,1)};
    \addlegendentry{Exterior Points}
    
    \addplot3[surf, faceted color = NordBrightBlue, color = NordBrightBlue, opacity = 0.5, line width = 1pt] coordinates {
     (0,3,0) (0,2,5) (0,2,6)
			 
	 (0,4,0) (0,4,5) (0,4,6)
	};
	\addplot3[surf, faceted color = NordBrightBlue, color = NordBrightBlue, opacity = 0.5, line width = 1pt] coordinates {
	 (0,2,6) (3,0,6) (4,0,6)
	 
	 (0,2,5) (3,0,3) (4,0,3)
	};
	\addplot3[surf, faceted color = NordBrightBlue, color = NordBrightBlue, opacity = 0.5, line width = 1pt] coordinates {
     (3,0,3) (4,0,3)
			 
	 (2,2,0) (4,2,0)
	};
    
	\addplot3[NordBrightBlack]coordinates {(0,0,5)  (0,1,5) (0,2,5) (0,2,4) (0,3,4) (0,3,3) (0,3,2) (0,3,1) (0,3,0) };
	
	\addplot3[NordBrightBlack]coordinates {(1,0,5) (1,1,5) (1,2,5) (1,2,4) (1,2,3)  (1,2,2) (1,2,1) (1,2,0) (1,3,0) };
	
	\addplot3[NordBrightBlack]coordinates {(2,0,5) (2,1,5) (2,1,4) (2,1,3) (2,1,2)  (2,1,1) (2,2,1) (2,2,0)};
	
    \addplot3[NordBrightBlack]coordinates {(3,0,5) (3,0,4) (3,0,3) (3,0,2) (3,1,2)  (3,1,1) (3,2,1) (3,2,0)};
    
    \node[fill = NordWhite] at (4,3,4.7) {start points};
    \node[fill = NordWhite] at (0.7,0.7,0) {end points};
    \end{axis}
    \end{tikzpicture}}
     \caption{Computing generators for $\ov{(y^3, y^2z^5, x^2y^2, x^2z^3)}$ using the Staircase algorithm}
    \label{fig:3dstaircase}
    \end{figure}
\end{ex}

\section{Continuity and jumping numbers for exponentiation}
\label{s:jumping}

In this section we analyze how the real powers of monomial ideals vary with the exponent. To be precise, for a fixed monomial ideal $I$ we consider continuity properties for the {\em exponentiation function} of base $I$
\begin{equation*}
    \exp:\R_+\to \mathcal{T}, \quad \exp(r)=\ov{I^r}    
\end{equation*}
whose domain is $\R_+$ with its Euclidean topology and whose codomain is the set $\mathcal{T}=\{\ov{I^r}\mid r\in \R_+\}$  endowed with the discrete topology.

We start with two elementary properties enjoyed by the family of real powers of the fixed ideal.

\begin{lem}
\label{lem:gradedfam}
If $I$ is a monomial ideal and $r,s \in \R_+$ then
\begin{enumerate}
    \item  if $s\geq r\geq 0 $, then the containment $\ov{I^s}\subseteq \ov{I^r}$ holds,
    \item $\ov{I^s}\cdot \ov{I^r}\subseteq \ov{I^{s+r}}$.
\end{enumerate}
\end{lem}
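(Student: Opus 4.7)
The plan is to work entirely at the level of Newton polyhedra, using the decomposition $NP(I)=\np(I)+\R_+^d$ from \eqref{eq:NP=np+orthant} together with the convexity of $\np(I)$. Since \Cref{def:realpower} defines $\ov{I^r}$ via the lattice points of $r\cdot NP(I)$, each of the two ideal containments can be reduced to a set-theoretic inclusion of scaled Newton polyhedra; this lets us avoid any argument involving integral equations of dependence.

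For part (1), it suffices to show $s\cdot NP(I)\subseteq r\cdot NP(I)$ when $s\geq r\geq 0$. The plan is to take an arbitrary $\ba\in s\cdot NP(I)$, use the Minkowski decomposition to write $\ba=s\bp+\bq$ with $\bp\in\np(I)$ and $\bq\in\R_+^d$, and then rewrite
\[
\ba \;=\; r\bp+\bigl((s-r)\bp+\bq\bigr).
\]
Because $\np(I)\subseteq\R_+^d$ and $s-r\geq 0$, the parenthesized term is in $\R_+^d$, which places $\ba$ in $r\cdot\np(I)+\R_+^d=r\cdot NP(I)$. The ideal containment then follows by intersecting with $\N^d$ and invoking \Cref{def:realpower}.

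For part (2), one reduces as usual to checking the claim on monomial generators: a typical generator of $\ov{I^s}\cdot\ov{I^r}$ has the form $\bx^{\ba+\bb}$ with $\ba\in s\cdot NP(I)\cap\N^d$ and $\bb\in r\cdot NP(I)\cap\N^d$. Writing $\ba=s\bp_1+\bq_1$ and $\bb=r\bp_2+\bq_2$ with $\bp_i\in\np(I)$ and $\bq_i\in\R_+^d$, the key manoeuvre is to form the convex combination
\[
\bp \;=\; \frac{s}{s+r}\bp_1+\frac{r}{s+r}\bp_2 \;\in\;\np(I),
\]
valid whenever $s+r>0$, so that $s\bp_1+r\bp_2=(s+r)\bp$. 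Then $\ba+\bb=(s+r)\bp+(\bq_1+\bq_2)$ lies in $(s+r)\cdot\np(I)+\R_+^d=(s+r)\cdot NP(I)$, and \Cref{def:realpower} gives $\bx^{\ba+\bb}\in\ov{I^{s+r}}$.

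Neither step presents a real obstacle; the whole argument is bookkeeping with the decomposition $NP(I)=\np(I)+\R_+^d$ and the convexity of $\np(I)$. The only mild subtlety is the edge case $s=r=0$ in part (2), where the convex combination is undefined; this is handled either by adopting the convention $\ov{I^0}=R$ (so the containment is automatic) or by noting that in this case both $\ba$ and $\bb$ already lie in $\R_+^d=0\cdot NP(I)$ and so does their sum.
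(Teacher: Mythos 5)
Your proof is correct and follows essentially the same route as the paper, which dispatches (1) as immediate from \Cref{def:realpower} and (2) via the one-line Minkowski identity $s\cdot NP(I)+r\cdot NP(I)=(s+r)\cdot NP(I)$ --- the nontrivial inclusion of which is precisely your convex-combination step, unfolded through the decomposition $NP(I)=\np(I)+\R_+^d$. One small caveat: your reduction in (1) ``it suffices to show $s\cdot NP(I)\subseteq r\cdot NP(I)$'' and the identity $r\cdot\np(I)+\R_+^d=r\cdot NP(I)$ both fail at $r=0$ (where $0\cdot NP(I)=\{\mathbf{0}\}$); you flagged the analogous boundary case for (2) but not here, though it is harmless since $\ov{I^0}=R$ makes the ideal containment trivial.
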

\begin{proof}
Assertion (1) is clear from \Cref{def:realpower}. To clarify assertion (2), note that monomials in $\ov{I^s}\cdot \ov{I^r}$ correspond to lattice points in the Minkowski sum 
\begin{equation*}
    s\cdot NP(I)+r\cdot NP(I)=(s+r)\cdot NP(I).
\end{equation*}
\end{proof}

Part (2) of \Cref{lem:gradedfam} shows that the real powers of a fixed monomial ideal form a {\em graded family}, although this terminology is more commonly used for families indexed by a discrete set. 
Property (1) of \Cref{lem:gradedfam} allows to define for each $r\in\R$ the monomial ideal
\begin{equation*}
    \ov{I^{>r}}=\bigcup_{s>r} \ov{I^s}.
\end{equation*}
We show that this ideal can be understood as a limit in $\mathcal{T}$, meaning that a sequence of real powers of $I$ where the exponents approach a real number $r$ from the right must stabilize to $\ov{I^{>r}}$.

\begin{prop}
\label{lem:I>r=I^t_n}
Let $I$ be a monomial ideal and let $\{t_n\}_{n\in\N}$ be a non-increasing sequence of non-negative real numbers with $r=\lim_{n\to \infty} t_n$. Then $\ov{I^{ t_n}}=\ov{I^{>r}}$ for $n$ sufficiently large. 
\end{prop}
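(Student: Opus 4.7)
The plan is to leverage Noetherianity of the polynomial ring $R$ to upgrade the totally ordered continuous family $\{\ov{I^s}\}_{s>r}$ (monotonically decreasing in $s$ by \Cref{lem:gradedfam}(1)) into a concrete stabilization statement: there exists a right neighborhood $(r,s^*]$ of $r$ on which $s\mapsto \ov{I^s}$ is constantly equal to $\ov{I^{>r}}$. Any sequence $\{t_n\}$ approaching $r$ from above will eventually enter this neighborhood, which yields the conclusion.

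Concretely, I would proceed as follows. First, the definition $\ov{I^{>r}}=\bigcup_{s>r}\ov{I^s}$ exhibits $\ov{I^{>r}}$ as an ascending union of monomial ideals (ascending by \Cref{lem:gradedfam}(1)), so it is itself a monomial ideal of the Noetherian ring $R$, and therefore admits a finite minimal monomial generating set $\{\bx^{\ba_1},\ldots,\bx^{\ba_k}\}$. For each $j$, membership in the union produces some $s_j>r$ with $\bx^{\ba_j}\in \ov{I^{s_j}}$; setting $s^*=\min\{s_1,\ldots,s_k\}$, we still have $s^*>r$, and \Cref{lem:gradedfam}(1) gives $\ov{I^{s_j}}\subseteq \ov{I^{s^*}}$, so every $\bx^{\ba_j}$ lies in $\ov{I^{s^*}}$. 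This yields $\ov{I^{>r}}\subseteq \ov{I^{s^*}}$, and the reverse inclusion is immediate from the definition of the union, so $\ov{I^{s^*}}=\ov{I^{>r}}$. A sandwich argument then yields $\ov{I^s}=\ov{I^{>r}}$ for every $s\in(r,s^*]$, since $\ov{I^{s^*}}\subseteq \ov{I^s}\subseteq \ov{I^{>r}}=\ov{I^{s^*}}$. Finally, since $t_n\to r$ with $t_n>r$, for $n$ sufficiently large we have $t_n\in(r,s^*]$ and hence $\ov{I^{t_n}}=\ov{I^{>r}}$.

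The main conceptual hurdle is bridging the continuous family indexed by $(r,\infty)$ with the finitary chain-condition content of Noetherianity. The key idea, that finitely many generators of the union each arise at finitely many exponents $s_j$, and that $\min\{s_j\}$ is still strictly greater than $r$, is precisely what lets a single ideal $\ov{I^{s^*}}$ absorb the entire union; without the finiteness of the generating set there would be no reason for the infimum of valid exponents to exceed $r$. Once this step is secured, the remainder is a routine monotonicity verification using \Cref{lem:gradedfam}(1).
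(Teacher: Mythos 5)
Your proof is correct, and it uses the same fundamental engine (Noetherianity) as the paper's argument, but deploys it a bit differently. The paper applies the ascending chain condition directly to the chain $\ov{I^{t_0}}\subseteq \ov{I^{t_1}}\subseteq\cdots$ induced by the given sequence, obtains a stable value $\ov{I^{t_N}}$, and then shows by a two-sided containment that this stable value must equal $\ov{I^{>r}}$. You instead apply finite generation to the ideal $\ov{I^{>r}}$ itself: each of its finitely many minimal generators already appears in some $\ov{I^{s_j}}$ with $s_j>r$, and setting $s^*=\min_j s_j$ (crucially still $>r$ because the minimum is over finitely many terms) gives $\ov{I^{s^*}}=\ov{I^{>r}}$ and hence, by sandwiching, constancy of $s\mapsto\ov{I^s}$ on the whole interval $(r,s^*]$. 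This is a modest but genuine upgrade: it extracts a uniform right neighborhood of constancy that is independent of any particular sequence, from which the statement for the given $\{t_n\}$ falls out as an immediate corollary. Both proofs implicitly rely on the sequence satisfying $t_n>r$ (if some $t_n=r$ the conclusion would force $\ov{I^r}=\ov{I^{>r}}$, which fails at jumping numbers), but this is an issue with the proposition's hypotheses as stated and affects the paper's argument equally.
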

\begin{proof}
A non-increasing sequence of non-negative numbers $\{t_n\}_{n\in\N}$ gives an ascending chain of ideals $\ov{I^{t_0}}\subseteq \ov{I^{ t_1}}\subseteq \cdots \subseteq \ov{I^r}$ by \Cref{lem:gradedfam} (1). Since the polynomial ring is Noetherian, any such chain must in fact stabilize, i.e. there exists $N \gg 0$ such that $\ov{I^{ t_n}}=\ov{I^{t_m}}$ for $m,n\geq N$. We show that the stable value of this chain is $\ov{I^{>r}}$. Indeed, from the definition of $\ov{I^{>r}}$ one deduces the containment
\[
\ov{I^{t_N}}=\bigcup_{n=0}^\infty \ov{I^{t_n}} \subseteq \bigcup_{s>r} \ov{I^s}=\ov{I^{>r}} .
\]
Conversely, for each $s>r$, there exists $n\geq N$ such that $s>t_n$, hence one has the containments $\ov{I^s} \subseteq \ov{I^{t_N}}=\ov{I^{t_n}}$ for all $s>r$ and consequently  $\ov{I^{t_N}}\supseteq \ov{I^{>r}}$.
\end{proof}

To distinguish those real numbers $r$ for which the function $\exp:\R_+\to \mathcal{T}, \exp(r)= \ov{I^r}$ is right discontinuous, we term them jumping numbers.

 \begin{defn}
 \label{defn:jumping}
A {\em jumping number} for $I$ is a real number $r\in \R_+$ for which the real powers of $I$ are not equal to $\ov{I^r}$ when we approach $r$ from the right, i.e.
\begin{equation*}
    \ov{I^r}\neq \ov{I^{>r}}.    
\end{equation*}
\end{defn}

\begin{ex}
0 is a jumping number for any monomial ideal since $\ov{I^0}=R$ but $\ov{I^r}$ is a proper ideal for any $r>0$.
\end{ex}

\begin{ex}
For $I=(x^4,x^2y,xy^3)$ we have that $\frac{1}{3}$ is not a jumping number
while $\frac{1}{2}$ is a jumping number. This is because for small values of $\varepsilon>0$ there is an equality 
\[\frac{1}{3}\cdot NP(I)\cap\N^2=\left(\frac{1}{3}+\varepsilon \right)\cdot NP(I)\cap \N^2,\]
while 
\[\frac{1}{2}\cdot NP(I)\cap\N^2 \neq\left(\frac{1}{2}+\varepsilon \right)\cdot NP(I)\cap \N^2\]
because the point $(2,0)$ belongs to the leftmost set but not the rightmost. In fact, for the ideal $I$ in this example, we have $(x^2, xy)=\ov{I^{1/3}}=\ov{I^{>1/3}}=\ov{I^{1/2}}\neq \ov{I^{>1/2}}=(x^3,xy)$.

\begin{figure}[!h]
 \centering \scalebox{0.6}{
\begin{tikzpicture}
    \pgfplotsset{every axis legend/.append style={legend pos=outer north east,fill=NordBlack,draw=NordWhite}}
    \begin{axis}  
    [posQuad, xtick={0,...,3}, ytick={0,...,3}, 
    xmin=0, xmax=3, ymin=0, ymax=3]
    \addplot[NordCyan, mark=*, mark color=NordCyan] coordinates{(1.33,0) (.66,0.33) (0.33,1)};
    \addplot[only marks,mark=*, fill=NordYellow, draw=NordYellow] coordinates{(2,2) (3,2) (3,3) (2,3) (3,1) (3,0) (2,1) (1,2) (1,3)};
    \addplot[only marks, mark=*, fill=NordYellow, draw=NordYellow]coordinates{(1,1) (2,0)};
    \addplot[NordCyan] coordinates{(5,0) (1.33,0) (.66,0.33) (0.33,1) (0.33,5)};
    \addplot[fill=NordBrightBlue, opacity=0.5, line width=0pt] coordinates{(5,0) (1.33,0) (.66,0.33) (0.33,1) (0.33,5) (5,5)};
    \addplot[only marks,mark=*, fill=NordYellow, draw=NordYellow] coordinates{(2,2) (3,2) (3,3) (2,3) (3,1) (3,0) (2,1) (1,2) (1,3)};
    \addplot[only marks, mark=*, fill=NordYellow, draw=NordYellow]coordinates{(1,1) (2,0)};
\end{axis}\end{tikzpicture}

\begin{tikzpicture}
    \pgfplotsset{every axis legend/.append style={legend pos=outer north east,fill=NordBlack,draw=NordWhite}}
    \begin{axis}  
    [posQuad, xtick={0,...,3}, ytick={0,...,3}, 
    xmin=0, xmax=3, ymin=0, ymax=3]
    \addplot[NordCyan, mark=*, mark color=NordCyan] coordinates{(2,0) (1,0.5) (0.5,1.5)};
    \addplot[only marks,mark=*, fill=NordYellow, draw=NordYellow] coordinates{(2,2) (3,2) (3,3) (2,3) (3,1) (3,0) (2,1) (1,2) (1,3)};
    \addplot[only marks, mark=*, fill=NordYellow, draw=NordYellow]coordinates{(1,1) (2,0)};
    \addplot[NordCyan] coordinates{(5,0) (2,0) (1,0.5) (0.5,1.5) (0.5,5)};
    \addplot[fill=NordBrightBlue, opacity=0.5, line width=0pt] coordinates{(5,0) (2,0) (1,0.5) (0.5,1.5) (0.5,5) (5,5)};
    \addplot[only marks,mark=*, fill=NordYellow, draw=NordYellow] coordinates{(2,2) (3,2) (3,3) (2,3) (3,1) (3,0) (2,1) (1,2) (1,3)};
    \addplot[only marks, mark=*, fill=NordYellow, draw=NordYellow]coordinates{(1,1) (2,0)};
\end{axis}\end{tikzpicture}
}
\caption{Comparing $\frac{1}{3} \cdot NP(I)$ and $\frac{1}{2} \cdot NP(I)$}
\end{figure}
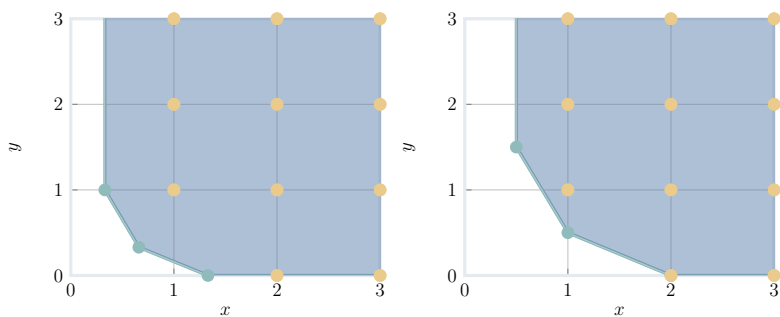
\end{ex}

To verify that right continuity is a special characteristic to study, we show that the exponentiation function  is a left continuous  function.

Towards this end recall that any polyhedron admits a description as a finite intersection of half spaces. We term the linear inequalities describing a polyhedron as an intersection of half spaces its bounding inequalities. In particular, if $I$ is a monomial ideal in a polynomial ring of dimension $d$ then $NP(I)$ is a lattice polyhedron, hence there exist a $d\times s$ matrix $A$ with entries in $\N$  and a vector $\bc\in\N^d$ such that
\begin{equation}
\label{eq:A}
 NP(I) = \{\bx \in \R_+^d \mid  A\bx \geq {\bc}\}.
\end{equation}
In \eqref{eq:A}, if  $A=[a_{ij}]$, we will further assume that  we have $\gcd(a_{i1},\ldots, a_{is},c_i)=1$ for each $1\leq i\leq d$.
 Moreover, scaling the Newton polyhedron amounts to scaling the constant term of the bounding inequalities,
that is,
\[
r\cdot NP(I) = \{\bx \in \R_+^d \mid  A\bx \geq r\cdot {\bc}\}.
\]

\begin{prop}
\label{lem:leftcont}
The function $\exp:\R_+\to \mathcal{T}, \exp(r)= \ov{I^r}$ is left continuous.
\end{prop}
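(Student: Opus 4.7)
Since $\mathcal{T}$ carries the discrete topology, left continuity at a fixed $r\in\R_+$ reduces to producing some $\varepsilon>0$ such that $\ov{I^t}=\ov{I^r}$ for all $t\in(r-\varepsilon,r]$; the case $r=0$ is vacuous. The inclusion $\ov{I^r}\subseteq\ov{I^t}$ is automatic from \Cref{lem:gradedfam}(1) whenever $t\leq r$, so the entire task reduces to establishing the reverse inclusion $\ov{I^t}\subseteq\ov{I^r}$ on a suitable left half-neighborhood of $r$.

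The strategy is to re-encode membership in $\ov{I^r}$ numerically using the bounding inequalities of $NP(I)$ recorded in \eqref{eq:A}. For $\ba\in\N^d$, the condition $\ba\in r\cdot NP(I)$ reads $A\ba\geq r\bc$, and rows indexed by $i$ with $c_i=0$ impose only $(A\ba)_i\geq 0$, which is automatic since $A$ and $\ba$ have nonnegative entries. Assuming $I\neq R$ (the complementary case being trivial), at least one row has $c_i>0$, and I would define
\[
f(\ba)\;=\;\min_{i:\,c_i>0}\frac{(A\ba)_i}{c_i}.
\]
With this definition, $\bx^\ba\in\ov{I^r}$ if and only if $f(\ba)\geq r$, and likewise $\bx^\ba\in\ov{I^t}$ if and only if $f(\ba)\geq t$.

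The crucial observation is that $f$ takes values in a discrete subset of $\Q_+$. Indeed, since $A$ has nonnegative integer entries, each ratio $(A\ba)_i/c_i$ lies in $\tfrac{1}{c_i}\N$, and hence $f(\ba)\in\tfrac{1}{L}\N$ where $L=\lcm\{c_i:c_i>0\}$. Since $\tfrac{1}{L}\N\cap[0,r)$ is finite, it has a maximum $v^*<r$ (and in fact $v^*\geq 0$, since $f(\mathbf{0})=0$). Setting $\varepsilon=r-v^*>0$, I claim $\ov{I^t}=\ov{I^r}$ for every $t\in(r-\varepsilon,r]$: given $\bx^\ba\in\ov{I^t}$, one has $f(\ba)\geq t>v^*$; since the values of $f$ lie in $\tfrac{1}{L}\N$ and no element of $\tfrac{1}{L}\N$ lies strictly between $v^*$ and $r$, this forces $f(\ba)\geq r$, and therefore $\bx^\ba\in\ov{I^r}$.

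The main obstacle is isolating the discreteness statement for the value set of $f$; once this is in hand the argument becomes a short combinatorial check. A naive attempt to invoke the Noetherian property of $R$ fails because the relevant chain $\ov{I^{t_n}}$ for $t_n\nearrow r$ is \emph{descending}, whereas Noetherianity only supplies the ascending chain condition.
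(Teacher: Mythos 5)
Your proof is correct. It rests on the same essential input as the paper's --- the $H$-description $r\cdot NP(I)=\{\bx\in\R_+^d\mid A\bx\geq r\bc\}$ from \eqref{eq:A} with integer matrix $A$ --- but repackages the key step more cleanly. The paper's argument introduces the Euclidean distance from a lattice point $\bp\notin r\cdot NP(I)$ to the violated bounding hyperplane, bounds it below by a quantity $\delta_i$ using the integrality of $\ba_i\cdot\bp$, and then appeals to left continuity of $\delta_i$ as a function of $r$ to conclude that $A_r=A_{r-\varepsilon}$ for small $\varepsilon$. You instead encapsulate membership in $\ov{I^r}$ via the single scalar $f(\ba)=\min_{c_i>0}(A\ba)_i/c_i$, observe that $f$ takes values in the discrete set $\tfrac{1}{L}\N$ with $L=\lcm\{c_i: c_i>0\}$, and then read off that no value of $f$ can lie strictly between $v^*=\max\bigl(\tfrac{1}{L}\N\cap[0,r)\bigr)$ and $r$. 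This isolates the role of integrality in one line, dispenses with the Euclidean-distance and $\mathrm{prec}$ machinery, and produces the threshold $\varepsilon=r-v^*$ explicitly. Your closing observation --- that Noetherianity cannot be invoked here because the relevant chain $\ov{I^{t_n}}$ for $t_n\nearrow r$ is descending --- is also correct, and is precisely why the paper treats left continuity (\Cref{lem:leftcont}) by a polyhedral argument while right stabilization (\Cref{lem:I>r=I^t_n}) is handled by the ascending chain condition.
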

\begin{proof}
Fix $r\in \R_+$ and consider the set $A_r=\N^d\setminus r\cdot NP(I)$. Observe that each point $\bp\in A_r$ lies at a positive Euclidean distance from any point in  $r\cdot NP(I)$. Indeed in the notation of \eqref{eq:A}, writing $\ba_i$ for the $i$-th row of $A$ we have $\ba_i\cdot \bp < rc_i$ for at least one $1\leq i\leq d$ and thus the distance from $\bp$ to the hyperplane of equation $\ba_i\cdot\bx=rc_i$ is  $d_i=(rc_i-\ba_i\cdot \bp)/\sqrt{\ba_i\cdot\ba_i}>0$. In particular, since $\ba_i\cdot \bp\in\N$,  it follows that $d_i\geq \delta_i:=(rc_i-{\rm prec}(rc_i))/\sqrt{\ba_i\cdot\ba_i}>0$, where ${\rm prec}(u)$ is the largest integer strictly smaller than $u$. Taking $\Delta=\min_{1\leq i\leq d} \delta_i$ we conclude that any $\bp\in A_r$ lies at distance at least $\Delta>0$ from any point in $NP(I)$.

Since each $\delta_i$ is a left continuous function of $r$, it follows that there exists $\varepsilon_0>0$ such that for any $0<\varepsilon<\varepsilon_0$ each point $\bp\in A_r$ lies at a positive Euclidean distance from any point in  $(r-\varepsilon)\cdot NP(I)$ as well. Equivalently we have $A_r\cap (r-\varepsilon)\cdot NP(I)=\emptyset$ which yields  $A_r=A_{r-\varepsilon}$ and thus $r\cdot NP(I)\cap \N^d=(r-\varepsilon)\cdot NP(I) \cap \N^d$ and $\ov{I^r}=\ov{I^{r-\varepsilon}}$ for $0<\varepsilon<\varepsilon_0$. 
\end{proof}

We now show that the real exponentiation function of a monomial ideal is a step function.

\begin{cor}
\label{cor:step}
Let $j< j'$ be two consecutive jumping numbers for $I$. Then the function $\exp:\R_+\to \mathcal{T}, \exp(r)= \ov{I^r}$ is constant on $(j,j']$ and $\ov{I^j}\neq \ov{I^{j'}}$.
\end{cor}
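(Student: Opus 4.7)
The plan is to first establish a right-constancy property at non-jumping numbers, then combine it with left continuity to propagate constancy of $\exp$ over the entire interval $(j,j']$, and finally compute $\ov{I^{>j}}$ to deduce the inequality.

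First, I would prove the following auxiliary fact: if $r\in\R_+$ is not a jumping number, then there exists $\eta>0$ such that $\ov{I^s}=\ov{I^r}$ for all $s\in[r,r+\eta]$. To see this, apply \Cref{lem:I>r=I^t_n} to the non-increasing sequence $t_n=r+1/n$: there exists $N$ with $\ov{I^{t_N}}=\ov{I^{>r}}$, and since $r$ is not a jumping number, $\ov{I^{>r}}=\ov{I^r}$. Monotonicity (\Cref{lem:gradedfam}(1)) then sandwiches $\ov{I^{t_N}}\subseteq\ov{I^s}\subseteq\ov{I^r}$ for every $s\in[r,t_N]$, forcing equality throughout, so one may take $\eta=1/N$.

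Next, fix any $r\in(j,j')$; since $j$ and $j'$ are consecutive jumping numbers, no point of $(j,j')$ is a jumping number, and in particular $r$ is not. Define
\[
U=\{s\in[r,j']:\ov{I^t}=\ov{I^r}\text{ for all }t\in[r,s]\}
\]
and let $s^*=\sup U$. I would first argue that $s^*\in U$: by definition of the supremum, $\ov{I^t}=\ov{I^r}$ for every $t\in[r,s^*)$, and by left continuity (\Cref{lem:leftcont}) the value $\ov{I^{s^*}}$ agrees with $\ov{I^t}$ for $t$ slightly less than $s^*$, whence $\ov{I^{s^*}}=\ov{I^r}$. If it were the case that $s^*<j'$, then $s^*$ would lie strictly between the consecutive jumping numbers and hence itself not be a jumping number; the auxiliary fact from the first step then supplies $\eta>0$ with $\ov{I^s}=\ov{I^{s^*}}=\ov{I^r}$ on $[s^*,s^*+\eta]$, and shrinking $\eta$ so that $s^*+\eta<j'$ produces an element of $U$ strictly larger than $s^*$, a contradiction. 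Hence $s^*=j'$, so $\exp$ is constant equal to $\ov{I^r}$ on $[r,j']$; letting $r$ vary over $(j,j')$ yields constancy of $\exp$ on all of $(j,j']$.

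Finally, to establish $\ov{I^j}\neq\ov{I^{j'}}$, I would compute $\ov{I^{>j}}=\bigcup_{s>j}\ov{I^s}$. For $s\in(j,j']$ the previous paragraph gives $\ov{I^s}=\ov{I^{j'}}$, while for $s>j'$ monotonicity gives $\ov{I^s}\subseteq\ov{I^{j'}}$; taking the union collapses everything to $\ov{I^{j'}}$. Since $j$ is a jumping number, \Cref{defn:jumping} yields $\ov{I^j}\neq\ov{I^{>j}}=\ov{I^{j'}}$, as desired. I expect the main technical obstacle to be the sup argument, where one must carefully combine left continuity at $s^*$ (to show $s^*\in U$) with the right-constancy at non-jumping numbers (to rule out $s^*<j'$); the interplay of these two one-sided behaviors is precisely what forces $\exp$ to be globally constant between consecutive jumping numbers.
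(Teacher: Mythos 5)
Your proof is correct and follows essentially the same route as the paper: use \Cref{lem:I>r=I^t_n} to get right-constancy at non-jumping numbers, \Cref{lem:leftcont} for left continuity, combine these to get constancy on $(j,j']$, and finally invoke the definition of jumping number at $j$ to get $\ov{I^j}\neq\ov{I^{j'}}$. The paper compresses your supremum argument by simply observing that a function from the connected interval $(j,j']$ to the discrete space $\mathcal{T}$ which is continuous (in the appropriate one-sided senses at the endpoints) must be constant; your explicit sup argument is an unpacking of that same topological fact.
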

\begin{proof}
Since $j< j'$ are consecutive jumping numbers, meaning there is no jumping number in $(j,j')$, the exponentiation function is continuous on $(j,j']$ by a combination of \Cref{lem:I>r=I^t_n} and \Cref{lem:leftcont}, and left continuity at $j'$. Since $\mathcal{T}$ carries the discrete topology, this continuity is equivalent to the function being constant on $(j,j']$. However, the exponentiation function is right discontinuous at $j$ by the definition of jumping number, thus $\ov{I^j}$ is distinct from the common value of the exponentiation function on $(j,j']$, that is,  $\ov{I^j}\neq \ov{I^{j'}}$.
\end{proof}

Our next aim is to show that the jumping numbers for monomial ideals are rational. Utilizing the notation in \eqref{eq:A}  and setting $\ba_i$ to be the $i$-th row of the matrix $A$ therein, the facets of the Newton polyhedron are supported on hyperplanes $H_i$ with equation $\ba_i\bx=c_i$. Each facet $F_i$ of $NP(I)$ is thus cut out by a system formed by one equation and several inequalities of the form
\begin{equation}
\label{eq:Fi}
F_i=\left\{\bx \mid \ba_i \cdot \bx=c_i,
  \ba_j \cdot \bx \geq c_j \text{ for } 1\leq j\leq d, j\neq i \right \}.
\end{equation}

\begin{prop}
\label{lem:jumpHalfSpaces}
Given a monomial ideal $I$ with facets $F_i, 1\leq i\leq s$ for $NP(I)$ described as in \eqref{eq:Fi} above, the following are equivalent:
\begin{enumerate}
    \item $r\in \R_+$ is a jumping number for $I$;  
    \item for some $1\leq i\leq s$ such that $c_i\neq 0$ there exists a lattice point $\bp\in r\cdot F_i\cap \N^d$;
    \item for some $1\leq i\leq s$ such that $c_i\neq 0$ there exists an integer solution to the system of equations and inequalities that describes $r\cdot F_i$, namely
        \begin{equation}
            \label{eq:jumpsystem}
            \begin{cases}
           \ba_i \cdot \bx=rc_i\\
            \ba_j \cdot \bx \geq rc_j   \text{ for } 1\leq j\leq d, j\neq i.
            \end{cases}
        \end{equation}  
\end{enumerate}
\end{prop}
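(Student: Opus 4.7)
My plan is as follows. The equivalence of (2) and (3) is essentially tautological: the system \eqref{eq:jumpsystem} is obtained by scaling every right-hand side in the defining relations of $F_i$ from \eqref{eq:Fi} by $r$, so its real (respectively integer) solutions are exactly the points (respectively lattice points) of $r \cdot F_i$. I would devote the bulk of the proof to establishing (1) $\iff$ (2).

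The first step is to translate the jumping-number condition into a lattice-point statement. Applying \Cref{lem:I>r=I^t_n} to the decreasing sequence $t_n = r + 1/n$, there exists $\varepsilon_0 > 0$ such that $\ov{I^{>r}} = \ov{I^{r+\varepsilon}}$ for every $\varepsilon \in (0, \varepsilon_0)$. Consequently, $r$ is a jumping number if and only if there is some lattice point $\bp \in \N^d$ with $\bp \in r \cdot NP(I)$ yet $\bp \notin (r+\varepsilon) \cdot NP(I)$ for all $\varepsilon \in (0, \varepsilon_0)$.

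Next, I would use the half-space description \eqref{eq:A} to characterize such witnesses. For (2) $\Rightarrow$ (1): given $\bp \in r \cdot F_i \cap \N^d$ with $c_i \neq 0$, we have $\ba_i \cdot \bp = r c_i < (r+\varepsilon) c_i$ for every $\varepsilon > 0$, so $\bp \notin (r+\varepsilon) \cdot NP(I)$, and $\bp$ witnesses the jump. For the converse (1) $\Rightarrow$ (2), let $\bp$ be such a witness. For each $\varepsilon \in (0, \varepsilon_0)$ some defining inequality of $(r+\varepsilon) \cdot NP(I)$ must fail at $\bp$, producing an index $j(\varepsilon)$ with $\ba_{j(\varepsilon)} \cdot \bp < (r+\varepsilon) c_{j(\varepsilon)}$. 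Since the index set is finite, pigeonhole yields a fixed $j$ and a sequence $\varepsilon_n \to 0^+$ with $\ba_j \cdot \bp < (r+\varepsilon_n) c_j$; letting $n \to \infty$ gives $\ba_j \cdot \bp \leq r c_j$, and combined with the constraint $\ba_j \cdot \bp \geq r c_j$ coming from $\bp \in r \cdot NP(I)$ we obtain $\ba_j \cdot \bp = r c_j$. The case $c_j = 0$ is ruled out, since then the strict inequality $\ba_j \cdot \bp < (r+\varepsilon_n)\cdot 0 = 0$ would contradict $\ba_j \in \N^d$ and $\bp \in \N^d$. Hence $c_j > 0$ and $\bp \in r \cdot F_j \cap \N^d$, as required.

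The only subtle point is the bookkeeping needed to isolate the bounding inequalities with $c_i > 0$: those with $c_i = 0$ correspond to coordinate hyperplanes and are insensitive to scaling by $r$, so they cannot witness a jump. Once this is observed, the remainder is a clean combination of a pigeonhole argument with a limit, and no deeper input is required beyond \Cref{lem:I>r=I^t_n} and the half-space description of $NP(I)$.
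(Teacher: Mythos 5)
Your proof is correct, and it tracks the paper's argument closely in overall structure: $(2)\Leftrightarrow(3)$ is treated as tautological in both, and for $(2)/(3)\Rightarrow(1)$ you and the paper give essentially the same argument (a lattice point $\bp$ on $r\cdot F_i$ with $c_i\neq 0$ satisfies $\ba_i\cdot\bp = rc_i < (r+\varepsilon)c_i$, so $\bp$ drops out of every $(r+\varepsilon)\cdot NP(I)$).

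The genuine difference is in $(1)\Rightarrow(2)$. The paper argues by contrapositive: assuming no facet $r\cdot F_i$ with $c_i\neq 0$ contains a lattice point, it bounds $r\cdot NP(I)\setminus(r+\varepsilon)\cdot NP(I)$ by the slab $\{\bx \mid \ba_i\cdot\bx\in[rc_i,(r+\varepsilon)c_i)\ \text{for some}\ i\ \text{with}\ c_i\neq 0\}$ and chooses $\varepsilon_0 = \min_{c_i\neq 0}(\mathrm{next}(rc_i)-rc_i)/c_i$ to force any lattice point in the difference onto a facet, then concludes $\ov{I^r}=\ov{I^{r+\varepsilon}}$. You instead argue directly: after using \Cref{lem:I>r=I^t_n} to reduce the jumping condition to the existence of a witness $\bp\in r\cdot NP(I)\cap\N^d$ lying outside $(r+\varepsilon)\cdot NP(I)$ for all small $\varepsilon$, you pigeonhole over the finitely many bounding inequalities to extract a fixed index $j$ that fails along a sequence $\varepsilon_n\to 0^+$, pass to the limit to get $\ba_j\cdot\bp\leq rc_j$, and combine with $\ba_j\cdot\bp\geq rc_j$ (from $\bp\in r\cdot NP(I)$) and the exclusion of $c_j=0$ (nonnegativity of $\ba_j\cdot\bp$) to land $\bp$ on $r\cdot F_j$. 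Your route avoids constructing the explicit $\varepsilon_0$ bound and makes the reliance on \Cref{lem:I>r=I^t_n} explicit where the paper leaves it implicit; the paper's route is more self-contained and produces the quantitative $\varepsilon_0$, which is also of some independent interest. Both are valid, and neither is clearly simpler — the pigeonhole-plus-limit step and the $\mathrm{next}$-function bound carry roughly equal weight.
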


\begin{proof}
$(2)\Leftrightarrow (3)$ is clear.

$(1)\Rightarrow (2)$
We show the contrapositive. Assume that $r\in \R_+$ is such that the union of the  facets of $r\cdot F_i$ of $r\cdot NP(I)$ corresponding to $c_i\neq 0$ contains no lattice point. Note that
\[
r\cdot NP(I)\setminus (r+\varepsilon)\cdot NP(I)\subseteq \{\bx \mid \ba_i \cdot \bx \in [rc_i, (r+\varepsilon)c_i) \text{ for some } i \text{ with } c_i\neq 0\}.\]
Moreover there is at least one $1\leq i\leq d$ so that $c_i\neq 0$ since $I\neq R$. 
Taking  $\varepsilon< \varepsilon_0:=\min_{c_i\neq 0}\{ (\rm{next}(rc_i) -rc_i)/c_i\}$, where $\rm{next}(rc_i)$ is the smallest integer strictly larger than $rc_i$, one can ensure that $[rc_i, (r+\varepsilon)c_i)\cap \N\subseteq \{rc_i\}$ whenever $c_i\neq 0$. 
This means that any possible lattice point $\bt$ in $r\cdot NP(I)\setminus (r+\varepsilon)\cdot NP(I)$  satisfies $\ba_i\cdot \bt =rc_i$ for some $c_i\neq 0$. Thus we see that $\bt$ lies on a facet $F_i$ which  contains no lattice points by assumption. Thus  there are no lattice points in 
$r\cdot NP(I)\setminus (r+\varepsilon)\cdot NP(I)$.

It follows that $\ov{I^r}=\ov{I^{r+\varepsilon}}$ for $0<\varepsilon<\varepsilon_0$ and thus $r$ is not a jumping number for $I$.

$(3)\Rightarrow (1)$
Let $\mathbf{p}\in\N^d$ be an integer solution to \eqref{eq:jumpsystem}. Since this implies $\bp\in r\cdot F_i\subseteq r\cdot NP(I)$,  we see that $\bx^{\mathbf{p}} \in \overline{I^r}$. Since $\bp$ attains equality in the first equation of \eqref{eq:jumpsystem} it follows that $\bp$ satisfies $\ba_{i}\cdot \bp< (r+\varepsilon)c_i$ for any $\varepsilon>0$. (This uses $c_i\neq 0$.) Thus we conclude $\bp\not \in (r+\varepsilon)\cdot NP(I)$ and $\bx^{\mathbf{p}} \notin \overline{I^{r+\epsilon}}$ for all $\epsilon > 0$ and therefore $\bx^\bp \notin \overline{I^{>r}}$. Consequently $r$ is a jumping number.
\end{proof}

From the above characterization we obtain that jumping numbers control the behavior of all real powers of a given monomial ideal and are all rational numbers.

\begin{thm}
\label{thm:jumpingrational}
Let $I$ be a monomial ideal.
\begin{enumerate}
\item All jumping numbers for $I$ are rational.
\item All distinct real powers of $I$ are given by rational exponents, i.e., for each $r\in \R_+$ there exists $r'\in \Q$ so that $\ov{I^r}=\ov{I^{r'}}$. Moreover $r'$ can be taken to be a jumping number for $I$.
\item If $r$ is a jumping number of $I$ then $nr$ is a jumping number for all $n \in \N$.
\item If $\mathbf{v}$ is a vertex of $NP(I)$, then for all $n\in \N$ the number $r_n=\frac{n}{\gcd(v_1, \cdots, v_d)}$ is a jumping number of $I$ .
\item The set of jumping numbers can be written as a finite union of scaled monoids
$
\mathcal{J}= \bigcup_{c_i\neq 0} \frac{1}{c_i} S_i.
$
Here each $S_i$ is a submonoid of the numerical semigroup generated by the entries of the $i$-th row  of the matrix $A$  in \eqref{eq:A} and $c_i$ are the components of the vector $\bc$ in \eqref{eq:A}.
\end{enumerate}
\end{thm}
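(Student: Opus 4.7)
The plan is to prove the five parts in the order (1), (3), (4), (5), (2), since the later items build on the earlier ones. The main workhorse throughout is \Cref{lem:jumpHalfSpaces}, which recasts ``$r$ is a jumping number'' as the existence of an integer solution $\bp \in \N^d$ to the system \eqref{eq:jumpsystem} for some index $i$ with $c_i \neq 0$.

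Part (1) is immediate: the equation $\ba_i \cdot \bp = r c_i$ forces $r c_i \in \N$, so $r = (\ba_i \cdot \bp)/c_i \in \Q$. For part (3), I would multiply an integer witness $\bp$ for $r$ by $n$; linearity of the defining system then shows that $n\bp$ witnesses the jumping number $nr$. For part (4), letting $g = \gcd(v_1, \ldots, v_d)$ makes $\bv/g$ a lattice point, so $\bp_n := n \bv/g \in \N^d$ is the natural candidate witness. The verification reduces to picking a facet $F_i$ through $\bv$ with $c_i \neq 0$, which exists because the alternative would place $\bv$ on $d$ coordinate hyperplanes and force $\bv = 0$; one then scales the defining inequalities of $F_i$ by $n/g$.

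For part (5), I would set
\[
S_i := \left\{ r c_i : r \in \mathcal{J} \text{ and the system \eqref{eq:jumpsystem} for index } i \text{ has an integer solution} \right\},
\]
so that the decomposition $\mathcal{J} = \bigcup_{c_i \neq 0} \tfrac{1}{c_i} S_i$ follows directly from \Cref{lem:jumpHalfSpaces}. The containment of $S_i$ in the numerical semigroup generated by $a_{i1}, \ldots, a_{id}$ is read off from $r c_i = \sum_k a_{ik} p_k$ with $p_k \in \N$; the monoid axioms amount to $0 \in S_i$ (witnessed by $\bp = 0$) and closure under addition via the witness-summation trick from (3).

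Part (2) is where I expect the main subtlety, because it requires combining the step function structure of \Cref{cor:step} with the discreteness of $\mathcal{J}$. First I would use (5) to deduce $\mathcal{J} \subseteq \tfrac{1}{L}\N$ where $L = \lcm\{c_i : c_i \neq 0\}$; in particular every nonempty subset of $\mathcal{J}$ bounded below attains its infimum. Given $r \in \R_+$, if $r \in \mathcal{J}$ I take $r' = r$. Otherwise, assuming the nontrivial case $I \neq R$, part (4) guarantees arbitrarily large jumping numbers, so there is a smallest jumping number $j' > r$; together with the preceding jumping number $j \leq r$ (which exists since $0 \in \mathcal{J}$ when $I$ is proper), \Cref{cor:step} shows that the exponentiation function is constant on $(j, j']$, whence $\ov{I^r} = \ov{I^{j'}}$ and $r' = j'$ is the desired rational jumping number.
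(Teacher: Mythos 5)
Your argument is correct, and for part (2) it follows a genuinely different route than the paper. The paper proves the five parts in the listed order, so when it reaches (2) it does not yet know that the set of jumping numbers is discrete; it instead sets $r'=\inf\{u>r : u\in\mathcal J\}$ and shows the infimum is attained by a contrapositive limiting argument (a sequence of distinct jumping numbers converging to $r'$ would, via \Cref{lem:I>r=I^t_n} and right continuity at $r'$, eventually all give the same ideal, contradicting \Cref{cor:step}). You instead reorder to (1), (3), (4), (5), (2) and read off from (5) that $\mathcal J\subseteq\frac1L\N$ with $L=\lcm\{c_i : c_i\neq 0\}$, which makes the existence of consecutive jumping numbers flanking $r$ immediate and bypasses the sequence argument entirely; part (4) then supplies arbitrarily large jumping numbers so the successor $j'$ exists. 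This is a clean simplification, and I would say it is the more transparent proof. You also add a useful detail the paper glosses over in (4): justifying that some facet through $\bv$ has $c_i\neq 0$. One small wording fix there: the facets of $NP(I)$ supported on hyperplanes through the origin are indeed coordinate hyperplanes (since $\ba_i\geq 0$ and a facet has dimension $d-1$, which forces $\ba_i$ to have a single nonzero entry), so your conclusion $\bv=0$ is right, but it might be worth spelling out this reduction rather than asserting ``coordinate hyperplanes'' directly. Finally, both your proof and the paper's implicitly assume $I\neq R$ (otherwise $\mathcal J=\emptyset$ and part (2)'s ``moreover'' clause has no content); you flag this, the paper does not.
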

\begin{proof}
(1) follows since  \Cref{lem:jumpHalfSpaces} (3) yields that there is an integer solution $\bp$ to an equation of the form $\ba_i \cdot \bp=rc_i$ where $\ba_i$ is a row of the matrix $A$ in \eqref{eq:A} and $c_i\neq 0$. Since the entries of $\ba_i, \bp$, and $c_i$ are natural numbers, this gives $r\in\Q$.

(2)  If $r\in \Q_+$ is a jumping number, set $r'=r$. If $r$ is not a jumping number, let 
\[r'=\inf\{  u \mid u>r \text{ and } u \text{ is a jumping number for } I\}. \]
Notice first that $r'$ is in fact the minimum of the set above, equivalently  $r'\in \Q$ is a jumping number for $I$. Indeed, if this is not the case, then there is a sequence of pairwise distinct jumping numbers $\{u_n\}_{n\in \N}$ converging to $r'$ from the right. Since we have assumed $r'$ is not a jumping number,   the exponential function with base $I$ is right continuous at $r'$, thus it must be the case that $\ov{I^{u_n}}=\ov{I^{r'}}$ for $n\gg 0$. This contradicts that the numbers $u_n$ are distinct jumping numbers, since distinct jumping numbers yield distinct real powers by \Cref{cor:step}. Another application of  \Cref{cor:step} together with the observation that $r$ is not a jumping number yields that the exponentiation function is constant on $[r, r']$, thus we conclude there is an equality $\ov{I^r}=\ov{I^{r'}}$.

(3) follows since the condition on integer solutions to the system \eqref{eq:jumpsystem} in \Cref{lem:jumpHalfSpaces} is preserved upon scaling the system by any natural number.

(4) Each vertex $\bv$ of $NP(I)$ furnishes an integer solution to the system of (in)equalities \eqref{eq:jumpsystem} corresponding to each facet $F_i$ such that $\bv\in F_i$. Scaling by $r_n$ we see that $r_n\cdot \bv\in \N^d$ is an integer solution to the analogous system
\[    \begin{cases}
    \ba_i \cdot \bx=r_nc_i,\\
\ba_j \cdot \bx\geq r_nc_j \text{ for } 1\leq j\leq d, i\neq j.
    \end{cases}
    \]
 \Cref{lem:jumpHalfSpaces} yields that $r_n$ is a jumping number for $I$.

For (5), for each $1\leq i\leq d$, let $a_{ij}\in\N$ be the entries in the $i$-th row of the matrix $A$ in  \eqref{eq:A} and $c_i$ the entries of $\bc$. For each $i$ with $c_i\neq 0$ set 
\[
S_i=\left \{rc_i \mid  \exists x_1,\ldots, x_d\in \N\cup\{0\} \text{ s.t.}\sum_{j=1}^d a_{ij}x_j= rc_i, \sum_{j=1}^d a_{lj}x_j\geq  rc_l \text{ for } l\neq i \right \}.
\]
 It is clear that $S_i\subset \N\cup\{0\}$. Moreover $S_i$ is a monoid as $0\in S_i$ and $rc_i, r'c_i\in S_i$ imply $(r+r')c_i\in S_i$ by summing the respective (in)equalities. The existence of a non-negative integer solution to the equation $\sum_{j=1}^d a_{ij}x_j= rc_i$ implies that $rc_i$ belongs to the numerical semigroup $M_i$ generated by the integers $a_{ij}$ for $1\leq j\leq d$, thus $S_i\subseteq M_i$. 
With this notation, \Cref{lem:jumpHalfSpaces} can be rephrased to say that the set of jumping numbers for $I$ is 
\[
\mathcal{J}= \bigcup_{c_i\neq 0} \frac{1}{c_i} S_i.
\]
\end{proof}

In regards to item (1) of \Cref{thm:jumpingrational} we observe that every non-negative rational number is a jumping number for some monomial ideal. Indeed if $r=\frac{p}{q}$ with $p,q\in \N, q\neq 0$ then $r$ is a jumping number of $I=(x_1^q)$.

Item (2) of \Cref{thm:jumpingrational} yields a new description for the image of the exponentiation function with base $I$
\[
\mathcal{T}=\{\ov{I^r}\mid r\in \Q\text{ is a jumping number for } I\}.
\]
Moreover, the elements of the set $\mathcal{T}$ listed above are pairwise distinct by \Cref{cor:step}.

We end with a worked out example which illustrates the jumping numbers and real powers of a particular monomial ideal using the criterion in \Cref{lem:jumpHalfSpaces}.

\begin{ex}
    The monomial ideal $I = (x^9, x^4y^3,x^2y^5,y^8)$ has Newton polyhedron depicted in \Cref{fig:coloredboundaries} with vertices at $(9,0),(4,3), (2,5), (0,8)$.
 
 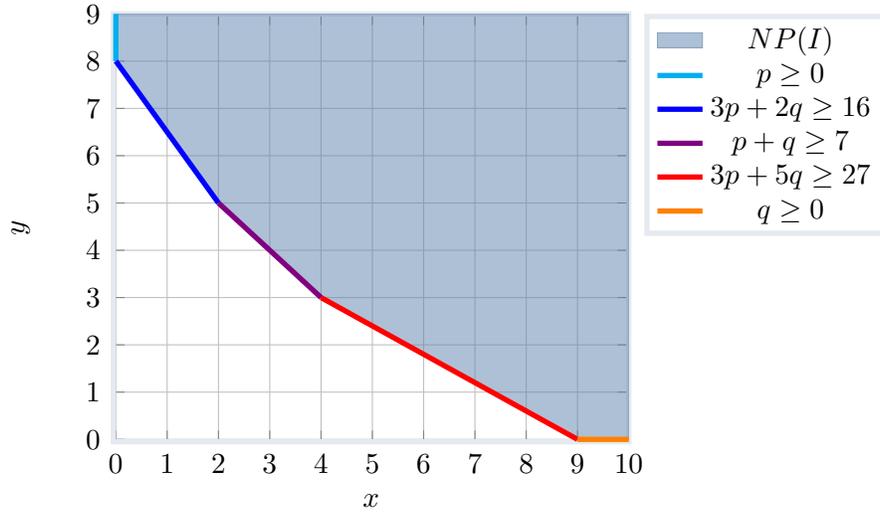
\begin{figure}[h!]
    \centering
    \begin{tikzpicture}
        \tikzstyle{every node}=[font=\small]
        \pgfplotsset{every axis legend/.append style={legend pos=outer north east,draw=NordWhite}}
        
        \begin{axis}
        [posQuad, xtick={0,...,10}, ytick={0,...,9}, 
        xmin=-0.05, xmax=10, ymin=-0.05, ymax=9]
        
        \addplot[area style, fill=NordBrightBlue, opacity=0.5, line width=0pt, draw = NordBrightBlue] coordinates{(0,8) (2,5) (4,3) (9,0) (10,0) (10,9) (0,9)};
        \addlegendentry{$NP(I)$}
        
        \addplot[cyan] coordinates{(0,9) (0,8)};
        \addlegendentry{$p\geq 0$}

        \addplot[blue] coordinates{(0,8) (2,5)};
        \addlegendentry{$3p+2q\geq 16$}
        
        \addplot[violet] coordinates{(2,5) (4,3)};
        \addlegendentry{$p+q\geq 7$}
        
        \addplot[red] coordinates{(4,3) (9,0)};
        \addlegendentry{$3p+5q\geq 27$}
        
        \addplot[orange] coordinates{(9,0) (10,0)};
        \addlegendentry{$q\geq 0$}
        \end{axis}

    \end{tikzpicture}
    \caption{The Newton polyhedron of $(x^9, x^4y^3,x^2y^5,y^8)$}
    \label{fig:coloredboundaries}
\end{figure}
 
 We show that the jumping numbers of $I$ are the elements of the following set 
 \begin{equation}
 \label{eq:jump}
\mathcal{J}=\left \{ 0, \frac{i}{7}, \frac{j}{16}, \frac{k}{27}   \mid i\geq 2, j\in\{2,4,6\} \text{ or } j\geq 8, k\in\{3,6,9,11,12,14,15\} \text{ or } k\geq 17 \right\}.
\end{equation}
The  faces of the Newton polyhedron $\textcolor{cyan}{F_1}, \textcolor{violet}{F_2}, \textcolor{blue}{F_3}, \textcolor{red}{F_4}, \textcolor{orange}{F_5}$ are shown in \Cref{fig:coloredboundaries} together with the corresponding bounding inequalities for $NP(I)$. 
Putting these inequalities in the form of \eqref{eq:A} yields
\[
\begin{bmatrix}
1 & 0 \\
3 & 2 \\
1 & 1 \\
3 & 5 \\
0 & 1 
\end{bmatrix}
\cdot 
\begin{bmatrix}
p \\ q 
\end{bmatrix}
\geq 
\begin{bmatrix}
0 \\ 16 \\7 \\27 \\0
\end{bmatrix}.
\]
\Cref{thm:jumpingrational} (5) says that the jumping numbers depend on the following three monoids: 
 \begin{eqnarray*}
 S_2&=&\{ 16 r \mid \exists p,q \in \N\cup\{0\} \text{ s.t. } p\geq 0, 3p+2q=16r, p+q \geq 7r, 3p+5q\geq 27r, q\geq 0\},\\
 S_3&=&\{ 7 r \mid \exists p,q \in \N\cup\{0\} \text{ s.t. } p\geq 0, 3p+2q\geq16r, p+q = 7r, 3p+5q\geq 27r, q\geq 0\},\\
S_4&=&\{ 27 r \mid \exists p,q \in \N\cup\{0\} \text{ s.t. } p\geq 0, 3p+2q\geq16r, p+q \geq 7r, 3p+5q= 27r, q\geq 0\}.
 \end{eqnarray*}
 Denote $\N_0=\N\cup\{0\}$.
 It turns out that $S_2=2\N_0+9\N_0$, $S_3= 2\N_0+ 3\N_0$, and $S_4=3\N_0+11\N_0+19\N_0$.   The set of jumping numbers is
 \[
 \mathcal{J}= \frac{1}{16} S_2 \cup \frac{1}{7} S_3 \cup \frac{1}{27} S_4,
 \]
Writing the the elements of each semigroup $S_1, S_2, S_3$ explicitly yields the set displayed in equation \eqref{eq:jump} above.

Below we list the rational powers of $I$ for exponents $r\in (0,1]$. The generators have been color coded based on the bounded edge of the Newton polyhedron that is giving rise to change in generator(s) cf.~\Cref{lem:jumpHalfSpaces} (2). Refer to the legend in \Cref{fig:coloredboundaries} for the color corresponding to each edge.  

    \[
    \overline{I^r} = 
    \begin{cases}
        (y,x)
        & r \in ( 0             , \frac{1}{9} ]    \\
        
        ( y, \textcolor{orange}{x^2})
        & r \in ( \frac{1}{9}   , \frac{1}{8} ] \\
        
        ( \textcolor{cyan}{y^2},  \textcolor{cyan}{xy}, x^2)                  
        & r \in ( \frac{1}{8}   , \frac{2}{9} ]   \\
        
        (y^2,  xy, \textcolor{orange}{x^3})
        & r \in ( \frac{2}{9}   , \frac{2}{8} ] \\
        
        ( \textcolor{cyan}{y^3},  xy, x^3)
        & r \in ( \frac{2}{8}   , \frac{2}{7} ] \\
        
        (y^3, \textcolor{violet}{xy^2}, \textcolor{violet}{x^2y},x^3)          
        & r \in ( \frac{2}{7}   , \frac{3}{9} ]   \\
        
        (y^3, xy^2, x^2y,\textcolor{orange}{x^4})
        & r \in ( \frac{3}{9}   , \frac{3}{8} ] \\
        
        (\textcolor{cyan}{y^4}, xy^2, x^2y,x^4)
         & r \in ( \frac{3}{8}   , \frac{11}{27} ] \\
        
        (y^4, xy^2, \textcolor{red}{x^3y},x^4)
        & r \in ( \frac{11}{27}   , \frac{3}{7} ] \\
        
        ( y^4, \textcolor{violet}{xy^3}, \textcolor{violet}{x^2y^2}, x^3y,x^4)
        & r \in ( \frac{3}{7}   , \frac{4}{9} ]  \\
        
        (y^4, xy^3, x^2y^2, x^3y,\textcolor{orange}{x^5})
        & r \in ( \frac{4}{9}   , \frac{4}{8} ] \\
        
        (\textcolor{cyan}{y^5}, xy^3, x^2y^2, x^3y,x^5)
        & r \in ( \frac{4}{8}   , \frac{14}{27} ] \\
        
        ( y^5, xy^3, x^2y^2, \textcolor{red}{x^4y},x^5)
        & r \in ( \frac{14}{27}   , \frac{5}{9} ]\\
        
        (y^5, xy^3, x^2y^2, x^4y,\textcolor{orange}{x^6})
        & r \in ( \frac{5}{9}   , \frac{9}{16} ]\\
        
        (y^5, \textcolor{blue}{xy^4}, x^2y^2, x^4y,x^6)
        & r \in ( \frac{9}{16}   , \frac{4}{7} ]\\
        
        (y^5, xy^4, \textcolor{violet}{x^2y^3}, \textcolor{violet}{x^3y^2}, x^4y,x^6)
        & r \in ( \frac{4}{7}   , \frac{5}{8} ] \\
        
        (\textcolor{cyan}{y^6}, xy^4, x^2y^3, x^3y^2, x^4y,x^6)
        & r \in ( \frac{5}{8}   , \frac{17}{27} ] \\
        
        (y^6, xy^4, x^2y^3, x^3y^2, \textcolor{red}{x^5y},x^6)
        & r \in ( \frac{17}{27}   , \frac{6}{9} ] \\
        
        ( y^6, xy^4, x^2y^3, x^3y^2, x^5y,\textcolor{orange}{x^7})
        & r \in ( \frac{6}{9}   , \frac{11}{16} ] \\
        
        ( y^6, \textcolor{blue}{xy^5}, x^2y^3, x^3y^2, x^5y,x^7)
        & r \in ( \frac{11}{16}   , \frac{19}{27} ] \\
        
        ( y^6, xy^5, x^2y^3, \textcolor{red}{x^4y^2} , x^5y,x^7)
        & r \in ( \frac{19}{27}   , \frac{5}{7} ]\\
        
        ( y^6, xy^5, \textcolor{violet}{x^2y^4}, \textcolor{violet}{x^3y^3}, x^4y^2, x^5y,x^7)
        & r \in ( \frac{5}{7}   , \frac{20}{27}]\\
        
        (y^6, xy^5,x^2y^4, x^3y^3, x^4y^2,\textcolor{red}{x^6y},x^7) & r\in(\frac{20}{27},\frac{3}{4}]\\
        
        (\textcolor{cyan}{y^7}, xy^5,x^2y^4, x^3y^3, x^4y^2,x^6y,x^7) & r\in(\frac{3}{4},\frac{7}{9}]\\
        
        (y^7, xy^5,x^2y^4, x^3y^3, x^4y^2,x^6y,\textcolor{orange}{x^8}) & r\in(\frac{7}{9},\frac{13}{16}]\\
        
        (y^7, \textcolor{blue}{xy^6},x^2y^4, x^3y^3, x^4y^2,x^6y,x^8) & r\in(\frac{13}{16},\frac{22}{27}]\\
        
        (y^7, xy^6,x^2y^4, x^3y^3, \textcolor{red}{x^5y^2},x^6y,x^8) & r\in(\frac{22}{27},\frac{23}{27}]\\
        
        (y^7, xy^6,x^2y^4, x^3y^3, x^5y^2,\textcolor{red}{x^7y},x^8) & r\in(\frac{23}{27},\frac{6}{7}]\\
        
        (y^7, xy^6,\textcolor{violet}{x^2y^5}, \textcolor{violet}{x^3y^4}, \textcolor{violet}{x^4y^3}, x^5y^2,x^7y,x^8) & r\in(\frac{6}{7},\frac{7}{8}]\\
        
        (\textcolor{cyan}{y^8}, xy^6,x^2y^5, x^3y^4, x^4y^3, x^5y^2,x^7y,x^8) & r\in(\frac{7}{8},\frac{8}{9}]\\
        
        (y^8, xy^6,x^2y^5, x^3y^4, x^4y^3, x^5y^2,x^7y,\textcolor{orange}{x^9}) & r\in(\frac{8}{9},\frac{25}{27}]\\
        
        (y^8, xy^6,x^2y^5, x^3y^4, x^4y^3, \textcolor{red}{x^6y^2},x^7y,x^9) & r\in(\frac{25}{27},\frac{15}{16}]\\
        
        (y^8, \textcolor{blue}{xy^7},x^2y^5, x^3y^4, x^4y^3, x^6y^2,x^7y,x^9) & r\in(\frac{15}{16},\frac{26}{27}]\\
        
        (y^8, xy^7,x^2y^5, x^3y^4, x^4y^3, x^6y^2,\textcolor{red}{x^8y},x^9) & r\in(\frac{26}{27},1]\\
    \end{cases}
    \]

\end{ex}


{\bf Acknowledgement.} We warmly thank the anonymous referees for providing careful and detailed comments which greatly improved the paper and in particular for their contributions to  \Cref{lem:intclosure} and \Cref{lem:jumpHalfSpaces}. 

This work was completed in the framework of the 2020 Polymath Jr. program \href{https://geometrynyc.wixsite.com/polymathreu}{https://geometrynyc.wixsite.com/polymathreu}. The second author was supported by the NSF RTG grant in algebra and combinatorics at the University of Minnesota  DMS--1745638. The fourth author was supported by NSF DMS--2101225.
\bibliographystyle{amsalpha}

\begin{thebibliography}{DLHTY04}

\bibitem[ASW15]{AS}
Zechariah Andersen and Sean Sather-Wagstaff, \emph{Krull dimension of monomial
  ideals in polynomial rings with real exponents}, Comm. Algebra \textbf{43}
  (2015), no.~8, 3411--3432. \MR{3354100}

\bibitem[CEHH17]{cooper2017symbolic}
Susan~M Cooper, Robert~JD Embree, Huy~T{\`a}i H{\`a}, and Andrew~H Hoefel,
  \emph{Symbolic powers of monomial ideals}, Proceedings of the Edinburgh
  Mathematical Society \textbf{60} (2017), no.~1, 39--55.

\bibitem[Ciu20]{CiupercaRational}
C\u{a}t\u{a}lin Ciuperc\u{a}, \emph{Derivations and rational powers of ideals},
  Arch. Math. (Basel) \textbf{114} (2020), no.~2, 135--145.

\bibitem[Ciu21]{CiupercaGolod}
\bysame, \emph{Integral closure of strongly {G}olod ideals}, Nagoya Math. J.
  (2021), 1--13.

\bibitem[DLHTY04]{LattE}
Jes\'{u}s~A. De~Loera, Raymond Hemmecke, Jeremiah Tauzer, and Ruriko Yoshida,
  \emph{Effective lattice point counting in rational convex polytopes}, J.
  Symbolic Comput. \textbf{38} (2004), no.~4, 1273--1302.

\bibitem[GS]{M2}
Daniel~R. Grayson and Michael~E. Stillman, \emph{Macaulay2, a software system
  for research in algebraic geometry}, Available at
  \url{http://www.math.uiuc.edu/Macaulay2/}.

\bibitem[HS06]{SH}
Craig Huneke and Irena Swanson, \emph{Integral closure of ideals, rings, and
  modules}, vol.~13, Cambridge University Press, 2006.

\bibitem[ISW13]{IS}
Daniel Ingebretson and Sean Sather-Wagstaff, \emph{Decompositions of monomial
  ideals in real semigroup rings}, Comm. Algebra \textbf{41} (2013), no.~11,
  4363--4377. \MR{3169524}

\bibitem[Knu06]{Knutson}
Allen Knutson, \emph{Balanced normal cones and {F}ulton-{M}ac{P}herson's
  intersection theory}, Pure Appl. Math. Q. \textbf{2} (2006), no.~4, Special
  Issue: In honor of Robert D. MacPherson. Part 2, 1103--1130.

\bibitem[Lew20]{LewisRational}
James Lewis, \emph{Limit behavior of the rational powers of monomial ideals},
  arXiv:2009.05173 (2020).

\bibitem[Mil20]{Miller}
Ezra Miller, \emph{Essential graded algebra over polynomial rings with real
  exponents}, arXiv:2008.03819 (2020).

\bibitem[Rus07]{RushRational}
David~E. Rush, \emph{Rees valuations and asymptotic primes of rational powers
  in {N}oetherian rings and lattices}, J. Algebra \textbf{308} (2007), no.~1,
  295--320.

\bibitem[4ti2]{4ti2}
4ti2 team, \emph{4ti2---a software package for algebraic, geometric and
  combinatorial problems on linear spaces}.

\end{thebibliography}

\providecommand{\bysame}{\leavevmode\hbox to3em{\hrulefill}\thinspace}
\providecommand{\MR}{\relax\ifhmode\unskip\space\fi MR }
\providecommand{\MRhref}[2]{%
  \href{http://www.ams.org/mathscinet-getitem?mr=#1}{#2}
}
\providecommand{\href}[2]{#2}

\end{document}